\newtheorem{theorem}{Theorem}[section]
\newtheorem{lemma}[theorem]{Lemma}
\newtheorem{claim}[theorem]{Claim}
\newtheorem{corollary}[theorem]{Corollary}
\newtheorem{proposition}[theorem]{Proposition}
\newtheorem{conjecture}[theorem]{Conjecture}
\newcommand{\suchthat}{\;\ifnum\currentgrouptype=16 \middle\fi|\;}
\newcommand{\R}{\mathbb{R}}
\newcommand*{\defeq}{\stackrel{\text{def}}{=}}
\DeclareMathOperator{\disc}{disc}
\DeclareMathOperator{\ent}{H}
\newcommand{\E}{{\rm I\kern-.3em E}}
\newcommand{\Var}{\mathrm{Var}}
\begin{document}
\title{Discrepancy in random hypergraph models }
\author{
Aditya Potukuchi \thanks{Department of Computer Science,  Rutgers University. {\tt aditya.potukuchi@cs.rutgers.edu}.} 
}

\maketitle

\begin{abstract} 
We study hypergraph discrepancy in two closely related random models of hypergraphs on $n$ vertices and $m$ hyperedges. The first model, $\mathcal{H}_1$, is when every vertex is present in exactly $t$ randomly chosen hyperedges. The premise of this is closely tied to, and motivated by the Beck-Fiala conjecture. The second, perhaps more natural model, $\mathcal{H}_2$, is when the entries of the $m \times n$ incidence matrix is sampled in an i.i.d. fashion, each with probability $p$. We prove the following:

\begin{itemize}
\item In $\mathcal{H}_1$, when $\log^{10}n \ll t \ll \sqrt{n}$, and $m = n$, we show that the discrepancy of the hypergraph is almost surely at most $O(\sqrt{t})$. This improves upon a result of Ezra and Lovett for this range of parameters.
\item In $\mathcal{H}_2$, when $p= \frac{1}{2}$, and $n = \Omega(m \log m)$, we show that the discrepancy is almost surely at most $1$. This answers an open problem of Hoberg and Rothvoss.
\end{itemize}
\end{abstract}

\section{Introduction}

Let $\mathcal{H} = (V,E)$ be a hypergraph, with $V$ as the set of vertices, and $E \subseteq 2^{V}$ as the set of (hyper)edges. Let $\mathcal{X} = \{\chi : V \rightarrow \{\pm 1\}\}$, be the set of $\pm 1$ colorings of $V$, and for $\chi \in \mathcal{X}$, and $e \in E$, denote $\chi(e) := \sum_{v \in e}\chi(v)$. The discrepancy of $\mathcal{H}$, denoted by $\disc(\mathcal{H})$ is defined as:

\[
\disc(\mathcal{H}) \defeq \min_{\chi \in \mathcal{X}}\max_{e \in E} |\chi(e)|.
\]

The study of this quantity, which seems to have been first defined in a paper of Beck~\cite{BEC81}, has led to some very interesting results with very diverse techniques. One of the most interesting open problems in discrepancy theory is what is known as the Beck-Fiala conjecture, regarding the discrepancy of $t$-regular hypergraphs. We say a hypergraph is $t$-regular if every vertex occurs in exactly $t$ edges.

\begin{conjecture}[Beck-Fiala conjecture]
For a $t$-regular hypergraph $\mathcal{H}$, we have that:
\[
\disc(\mathcal{H}) = O(\sqrt{t})
\]
\end{conjecture}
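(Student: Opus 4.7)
The Beck--Fiala conjecture has resisted proof since 1981, so any proof proposal here is necessarily aspirational; what follows is the route I would attempt and where it runs aground against current techniques.

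The plan is to attack the conjecture via an iterative partial coloring scheme combined with convex-geometric rounding. Writing the $m \times n$ incidence matrix as $A \in \{0,1\}^{m \times n}$ with every column summing to exactly $t$, I would aim to produce fractional colorings $x^{(0)} = 0, x^{(1)}, \ldots, x^{(L)} = \chi \in \{\pm 1\}^n$ with $L = O(\log n)$, such that at stage $j$ at least half of the remaining non-integer coordinates are rounded to $\pm 1$ while $\|A(x^{(j)} - x^{(j-1)})\|_\infty \leq c\sqrt{t}\cdot 2^{-j/2}$, so the $\ell_\infty$-error telescopes to $O(\sqrt{t})$. The engine for each step is a lower bound on the Gaussian measure of the slab body $K_j = \{y \in \R^{n_j} : \|A_j y\|_\infty \leq c\sqrt{t}\cdot 2^{-j/2}\}$, where $A_j$ is the restriction of $A$ to the $n_j$ still-fractional columns. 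Once $\gamma_{n_j}(K_j) \geq 1/2$, Banaszczyk's vector balancing theorem or the Lovett--Meka rounding produces the desired partial coloring.

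The critical calculation is the Gaussian measure bound. Since column sparsity is preserved throughout, each active row involves at most $t$ nonzero entries among the remaining variables, so a single row-slab has Gaussian measure at least $1 - 2\exp(-\Omega(t))$ at width $c\sqrt{t}$. A union bound over the $m_j$ effective rows gives $\gamma_{n_j}(K_j) \geq 1/2$ provided $m_j \leq \exp(\Omega(t))$. I would try to control $m_j$ by a \emph{killed row} argument: once a row has enough of its variables frozen so that no future assignment can push its discrepancy beyond $O(\sqrt{t})$, I can remove it from $A_j$ in subsequent stages. A careful potential-function analysis should show that rows die off quickly enough to keep $m_j$ bounded in terms of $n_j$ and $t$.

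The main obstacle, and the reason this conjecture is open, is precisely this union bound. For $t$-regular hypergraphs with $m$ enormously larger than $\exp(t)$, the slab argument degrades by a factor of $\sqrt{\log m / t}$, and Banaszczyk's theorem suffers the same loss, yielding only the known $O(\sqrt{t \log n})$ bound. The killed-row trick is morally what Beck's original entropy argument implements, and every known refinement leaks at least one polylogarithmic factor, either in the surviving-row count or in the killing threshold. I expect a successful attack to require either a genuinely adaptive convex body replacing the uniform $\ell_\infty$-slab (perhaps scaling differently across rows according to their remaining support sizes), or a non-geometric technique that exploits $t$-regularity globally; closing this logarithmic gap is the true bottleneck and, to my knowledge, has no known avenue in full generality.
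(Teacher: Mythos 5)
You have correctly recognized that this statement is the Beck--Fiala conjecture, which is an open problem: the paper does not prove it, and neither does anyone else. The paper merely records the conjecture as motivation, cites the Beck--Fiala theorem ($\disc \leq 2t-1$) and Bukh's $2t - \log^* t$ as the best general bounds, and then retreats to random hypergraph models where $O(\sqrt{t})$ can actually be established. So there is no ``paper's proof'' to compare against, and any ``proof proposal'' that claims to settle the conjecture would have to be wrong.

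Your writeup is honest about this and is, in fact, a fair summary of why the conjecture is hard: the iterative partial-coloring scheme with Gaussian slab measure bounds (Lovett--Meka or Banaszczyk-style rounding) is exactly the modern machinery, the killed-row/entropy bookkeeping is morally Beck's original argument, and the bottleneck you name --- that the union bound over active rows, or equivalently Banaszczyk's theorem, leaks a $\sqrt{\log}$ factor once $m \gg \exp(t)$ --- is the genuine obstruction, which is why the best unconditional bound remains $O(\sqrt{t \log n})$ (or, for the full range, $2t - \log^* t$). One small caveat: the step ``$\|A(x^{(j)} - x^{(j-1)})\|_\infty \leq c\sqrt{t}\cdot 2^{-j/2}$'' would require the per-row discrepancy to shrink geometrically across stages, which only happens when row supports also shrink geometrically; for worst-case $t$-regular hypergraphs the row supports can stay large for many rounds, and this is exactly where the telescoping fails without randomness. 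That is precisely the gap the paper closes in the random model $\mathcal{H}_1$, where the pseudorandom norm bound $\|Mv\| = O(\sqrt{t}\|v\|)$ on $\mathbf{1}^\perp$ guarantees that a typical partial coloring does shrink most row intersections proportionally. So: no error on your part, but also no proof --- and there should not be one.
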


Although this conjecture is usually stated for bounded degree hypergraphs (instead of regular ones), this is not really an issue. One can always add hyperedges containing just a single vertex, and make it regular, and increasing the discrepancy by at most $1$. Beck and Fiala~\cite{BF81} also proved that for any $t$-regular hypergraph $\mathcal{H}$,

\[
\disc(\mathcal{H}) \leq 2t-1
\] 

This is more commonly known as the Beck-Fiala theorem. Essentially the same proof can be done a bit more carefully to get a bound of $2t - 3$. Given the conjecture, it is perhaps surprising that the best upper bound, due to Bukh~\cite{BUK16}, is ``stuck at" $2t - \log^*t$ for large enough $t$.

It is possible that one of the reasons that the discrepancy upper bounds are so far away from the conjectured bound (assuming it's true), could be our inability to handle many `large' hyperedges. Indeed, if one is offered the restriction that each hyperedge is also of size $O(t)$ (regular and `almost uniform'), then a folklore argument using the Lov\'{a}sz Local Lemma shows that the discrepancy is bounded by $O(\sqrt{t \log t})$. 

All the results above do not really say much about the discrepancy of \emph{general} $n$-vertex, $n$-edge hypergraphs, since we can only bound the regularity and uniformity by $n$. For example, a randomly chosen coloring will give discrepancy $O(\sqrt{n \log n})$ with high probability. An extremely important result in this direction, that is also related to this work, is due to Spencer~\cite{SPE85} which states that for any hypergraph $\mathcal{H} =(V,E)$ where $|V| = |E| = n$, we have $\disc(\mathcal{H}) \leq 6 \sqrt{n}$. This approach uses the `partial coloring lemma' of Beck~\cite{BEC81}, which forms the basis for many other results, all of which seek to assign a coloring to the vertices in a sequence of steps, coloring a constant fraction of uncolored vertices in each step. 

One must note that the proof of Spencer was not algorithmic. In fact, Alon and Spencer(\cite{AS00}, $\S 14..5$) suggested that an efficient algorithm to output a coloring that achieves $O(\sqrt{n})$ discrepancy is not possible. However, this was shown to be incorrect by Bansal~\cite{BAN10} who shows an efficient algorithm to do the same task. However, the analysis of this algorithm still relied on the (non-algorithmic) discrepancy bound of $6 \sqrt{n}$. Later, Lovett and Meka~\cite{LM15} gave a `truly constructive' proof of the fact that the discrepancy is $O(\sqrt{n})$ which was also very interesting for the techniques used. More recently, a result due to Rothvoss~\cite{ROT17} gives an alternative proof of the same bound, which is also constructive, and more general.

\subsection{Discrepancy in random settings}

Recently, there has been some interest in the discrepancy of \emph{random} hypergraphs. Motivated by the seeming difficulty of bounding the discrepancy of general $t$-regular hypergrpahs, Ezra and Lovett~\cite{EL15} initiated the study discrepancy of \emph{random} $t$-regular hypergraphs. By random $t$-regular hypergraph, we mean the hypergraph sampled by the following procedure: We fix $n$ vertices $V$ and $m$ (initially empty) hypergedges $E$. Each vertex in $V$ chooses $t$ (distinct) hyperedges in $E$ uniformly and independently to be a part of. They showed that if $m \geq n$, then the discrepancy of such a hypergraph is almost surely $\sqrt{t \log t}$ for any growing function $t = t(n)$. The proof idea is the following: First observe that most of the hyperedges have size $O(t)$. For the remaining large edges, one can delete one vertex from every hyperedge and make them pairwise disjoint. This allows us to apply the folklore Lov\'{a}sz Local Lemma based argument, but with a slight modification which makes sure that the large edges have discrepancy at most $2$. Our first result states that in the case where $t$ is large enough, one gets the bound of $O(\sqrt{t})$, and is the content of Section~\ref{sec:H1}. More formally, we prove the following:

\begin{theorem}
\label{thm:H1}
There are constants $C_1$ and $C_2$ such that the following statement holds:
Let $\mathcal{H}_1$ be a random $t$-regular hypergraph on $n$ vertices and $n$ hyperedges, where $ C_1\log^{10}n \ll t \ll \sqrt{n} $. Then,

\[
\Pr\left(\disc(\mathcal{H}_1) \leq C_2\sqrt{t}\right) \geq 1 - o(1)
\]

Moreover, such a coloring can be outputted algorithmically in randomized $\operatorname{poly}(n)$ time.
\end{theorem}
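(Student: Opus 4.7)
The plan is to combine an iterative partial coloring procedure, in the style of Spencer and Rothvoss, with a pseudo-randomness property of $\mathcal{H}_1$. I would first establish two high-probability structural properties of $\mathcal{H}_1$. \emph{(i) Uniform edge sizes:} every hyperedge has size $(1+o(1))t$. The size of a fixed edge is approximately $\mathrm{Bin}(n, t/n)$ and concentrates sharply about $t$, so a union bound over the $n$ edges succeeds in the regime $t \gg \log n$ (in particular when $t \gg \log^{10} n$). \emph{(ii) Pseudo-randomness:} for every subset $U \subseteq V$ of size at least $n/\operatorname{polylog}(n)$, every edge $e$ satisfies $|e \cap U| \leq C (|U|/n)\, t$ for some absolute constant $C$. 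This is the key structural input: it ensures that restricting to a large uncolored subset shrinks every edge proportionally.

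Given these properties, I would iterate the Lovett--Meka (or Rothvoss) partial coloring lemma for $O(\log n)$ rounds, halving the number of uncolored vertices each round. In round $i$, the set of uncolored vertices $V_i$ has size $n_i = n/2^i$; by (ii), each edge has $s_{e,i} := |e \cap V_i| = O(t/2^i)$. Applying the partial coloring lemma with per-edge discrepancy threshold $\lambda_i = O(\sqrt{(t/2^i) \log(2n/n_i)}) = O(\sqrt{t \cdot i / 2^i})$ satisfies the entropy condition $\sum_e \exp(-\lambda_i^2 / (C\, s_{e,i})) \leq n_i/16$ and produces a coloring of at least $n_i/2$ of the remaining vertices. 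Each edge's total discrepancy is then
\begin{equation*}
\sum_{i=0}^{\log_2 n} \lambda_i \;\leq\; \sum_{i=0}^{\log_2 n} O\!\left(\sqrt{t \cdot i / 2^i}\right) \;=\; O(\sqrt{t}),
\end{equation*}
and the randomized polynomial-time algorithm follows from the constructive version of the partial coloring lemma.

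The main obstacle will be establishing (ii) in a form that can be fed into the iteration. The uncolored set $V_i$ is produced by the algorithm and therefore depends on $\mathcal{H}_1$, so it cannot simply be fixed in advance and treated with a single Chernoff bound. A naive union bound over all $\binom{n}{n_i}$ candidate subsets is too weak: the per-subset Chernoff tail of $\exp(-\Omega(t/2^i))$ does not defeat $\binom{n}{n_i} = 2^{\Omega(n i / 2^i)}$ for the small values of $i$ that dominate the sum, since $t \ll \sqrt{n}$. I expect the resolution to require either restricting to a structured, much smaller family of candidate $V_i$ (exploiting the specific form of the Lovett--Meka random walk and the fact that its trajectories have few ``effective'' degrees of freedom), a chaining or net argument over subsets, or a coupling between the algorithm's internal randomness and the hypergraph's randomness so that the desired concentration holds along the algorithm's trajectory directly.
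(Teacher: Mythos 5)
Your high-level plan — iterate Lovett--Meka partial coloring while using a pseudo-randomness property of the hypergraph to control edge sizes after each round — is the same architecture as the paper. But the proposal has two concrete problems.

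First, your structural property (ii) is false as stated. For a fixed edge $e$ of size $\Theta(t)$ and a subset $U$ of size $\alpha n \gg t$, one can choose $U \supseteq e$, giving $|e\cap U|=|e|=\Theta(t)$, which exceeds $C\alpha t$ whenever $\alpha$ is bounded away from $1$. The correct form of the property (the paper's Lemma~\ref{lem:pseudorandom}) can only guarantee $|e\cap S|\lesssim \alpha|e|+K\alpha t$ for \emph{all but} roughly $Cn/(K^2\alpha t)$ edges. Those exceptional large edges must then be dealt with by a separate mechanism: in the Lovett--Meka application one sets their thresholds $c_i=0$ so they accumulate zero discrepancy in each round, and one checks that their count stays under the $\alpha n/16$ entropy budget. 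This also affects your per-round bound $\lambda_i$: the correct pseudo-randomness gives $s_{e,i}=O(\alpha t + \sqrt t)$, not $O(\alpha t)$, and the additive $\sqrt t$ is what forces the paper into a two-stage analysis (the second stage, when $\alpha<t^{-0.4}$, tolerates a per-round discrepancy of $O(t^{0.4})$ over $O(\log n)$ rounds, which is where the hypothesis $t\gg\log^{10}n$ is really used).

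Second, and more fundamentally, you correctly identify the adaptivity problem — the uncolored set $V_i$ depends on $\mathcal{H}_1$, so one cannot Chernoff-plus-union-bound over all $\binom{n}{n_i}$ subsets — but you leave it unresolved, listing three speculative directions. The paper's actual resolution is the one you would need to supply: it proves the spectral bound $\|Mv\| = O(\sqrt t\,\|v\|)$ for all $v\perp\mathbf 1$ (Theorem~\ref{thm:norm}, via a Kahn--Szemer\'edi argument). This is a \emph{single} high-probability event on the hypergraph, and once it holds, the pseudo-randomness property holds \emph{deterministically} for every subset $S$ (Lemma~\ref{lem:pseudorandom} is just Markov applied to $\sum_e(|S\cap e|-\alpha|e|)^2 \le \|Mv_S\|^2$ for an appropriate $v_S\perp\mathbf 1$). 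This is exactly the ``random vs.\ pseudorandom'' step the paper highlights, and it is the central missing ingredient in your argument; without it the iteration cannot be closed.
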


As mentioned previously, this is proved via. the partial coloring approach, which is inspired by a later paper of Spencer~\cite{SPE88}, which computes the discrepancy of the projective plane upto a constant factor. A more general bound was also obtained by Matou\v{s}ek~\cite{MAT95}, who upper bounds the discrepancy of set systems of bounded VC-dimension. However, we will prove a constructive version which is essentially to apply the partial coloring lemma of Lovett and Meka. One ingredient that is needed to analyze this coloring is the following:

\begin{theorem}
\label{thm:norm}
 Let $M$ be the incidence matrix of a random $t$-regular set system: For any vector $v \in \mathbf{1}^{\perp}$, we have $\|Mv\| = O(\sqrt{t}\|v\|)$. 
\end{theorem}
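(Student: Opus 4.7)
The plan is to first reduce the claim to a spectral bound on a centered random matrix, and then to obtain the sharp $\sqrt{t}$ scaling via a trace moment calculation. Write $M = \bar M + N$, where $\bar M \defeq \E[M] = (t/n)J$ is the rank-one matrix whose image is spanned by $\mathbf{1}$. Since $\bar M v = 0$ for every $v \in \mathbf{1}^{\perp}$, one has $Mv = Nv$, so it is enough to show $\|N\|_{\mathrm{op}} = O(\sqrt{t})$ with probability $1 - o(1)$. By construction the columns $N_{\cdot i} = M_{\cdot i} - (t/n)\mathbf{1}$ are i.i.d.\ centered vectors (one per vertex, recording its independent choice of $t$ edges), each with $\|N_{\cdot i}\|^{2} = t(1 - t/n)$, so that $\E[\|N\|_{\mathrm{op}}^{2}] = O(t)$ is the order of magnitude one should expect.

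Viewing $N = \sum_{i} N_{\cdot i}\, e_{i}^{\top}$ as a sum of $n$ independent rank-one matrices, each of operator norm at most $\sqrt{t}$ and with total variance proxy of order $t$, matrix Bernstein gives $\|N\|_{\mathrm{op}} = O(\sqrt{t \log n})$. Under the hypothesis $t \gg \log^{10} n$ this extra $\sqrt{\log n}$ is not absorbed by the constant, so Bernstein falls short. To remove it, my plan is the standard trace/moment method: estimate $\E[\mathrm{Tr}((NN^{\top})^{k})]$ for a suitable $k = \omega(1)$, use the inequality $\|N\|_{\mathrm{op}}^{2k} \leq \mathrm{Tr}((NN^{\top})^{k})$, and conclude via Markov. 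The expectation expands as a sum over closed length-$2k$ walks in the bipartite edge-vertex incidence graph, and the leading contribution comes from ``tree-like'' non-backtracking walks, contributing on the order of $t^{k}$ per starting vertex in the spirit of Wigner/Marchenko--Pastur moment computations.

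The main technical obstacle is controlling the contribution of walks that revisit vertices or edges, i.e.\ walks which trace out short cycles in the random hypergraph. In the range $t \ll \sqrt{n}$ the bipartite incidence graph is locally tree-like with high probability (short cycles are rare), so such cyclic walks should be strictly subdominant; making this quantitative requires a careful combinatorial classification of walk shapes together with a tail estimate on cycle counts in $\mathcal{H}_{1}$. This is morally the same technical heart that underlies Friedman-type spectral gap theorems for random (bi)regular bipartite graphs, and is by far the most delicate part of the argument. If the full moment computation proves too heavy, a fallback is an $\varepsilon$-net argument on $\mathbf{1}^{\perp} \cap S^{n-1}$ combined with sharp tail bounds for $\|Nv\|$ at each fixed $v$ (exploiting column independence and the fact that each column has a deterministic number of ones), although avoiding the same $\sqrt{\log n}$ loss in that route is itself subtle and is where I would expect to spend most of the work.
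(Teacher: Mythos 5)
Your reduction $M = \bar M + N$ with $\bar M = (t/n)J$ and the observation that $Mv = Nv$ on $\mathbf 1^\perp$ is fine and standard, and you are right that matrix Bernstein loses a $\sqrt{\log n}$ factor that the theorem cannot afford. However, as written the proposal does not constitute a proof: both routes you describe stop exactly at the point where the work begins, and the idea that actually makes the argument close is missing.

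Your primary route is the trace/moment method. In principle this can give sharp $O(\sqrt{t})$ bounds for sparse random matrices, but you yourself flag the combinatorial bookkeeping of revisiting walks as ``by far the most delicate part'' and do not carry it out. There is also a model-specific complication you do not address: in $\mathcal H_1$ the columns of $N$ are independent but the entries within a column are not (each column has exactly $t$ ones), so the standard i.i.d.-entry moment bookkeeping (Wigner/Bai--Silverstein style) does not apply verbatim, and the correlations within a column have to be tracked through the walk classification. Left at this level of detail, this route is an outline of a hard computation, not a proof.

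Your fallback, the $\varepsilon$-net on $\mathbf 1^\perp \cap S^{n-1}$, is in fact the approach the paper takes (following Kahn--Szemer\'edi, as in~\cite{FKS}), but you explicitly say you do not see how to avoid the $\sqrt{\log n}$ loss ``and is where I would expect to spend most of the work.'' The missing idea is the Kahn--Szemer\'edi split of the bilinear form $y^\top M x = \sum_{u,v} x_u M_{u,v} y_v$ into ``small'' pairs $(u,v)$ with $|x_u y_v| \le \sqrt{t}/n$ and ``large'' pairs. For the small pairs, each martingale increment is $O(\sqrt{t}/n)$, so one gets Freedman/bounded-variance concentration of the form $\exp(-cn)$, which is strong enough to union-bound over the $(C/\varepsilon)^n$ net points with no logarithmic loss. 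For the large pairs a concentration bound is hopeless (too few independent contributions), so instead one proves a deterministic-looking ``discrepancy'' property of $M$ --- with high probability, for all vertex sets $A$ and edge sets $B$, the incidence count $I(A,B)$ is either at most $2\mu(A,B)$ or satisfies $I\log(I/\mu) \lesssim |B|\log(n/|B|)$ --- and then a purely combinatorial lemma of Kahn--Szemer\'edi shows this property alone forces $\sum_{(u,v)\in L}|x_u M_{u,v} y_v| = O(\sqrt t)$ uniformly over the net. This two-regime split (probabilistic for small entries, combinatorial for large ones) is precisely what replaces the naive ``sharp tail bound at each fixed $v$'' step you were hoping for, and without it the $\varepsilon$-net route does not go through. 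So the gap is not in your reduction but in the absence of this decomposition and the two lemmas it needs.
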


Although many variations of the above theorem are known and standard, one must need to verify it for our setting too. It should come as no surprise that, the proof follows that of Kahn and Szemer\'{e}di's in~\cite{FKS}, and is the content of Section~\ref{sec:norm}.

Another result that Ezra and Lovett proved, in~\cite{EL15}, that if $n \gg m^t$, then the discrepancy of the random $t$-regular hypergraph is at most $O(1)$. This has recently been improved due to Franks and Saks~\cite{FS18}, who prove that as long as $n = \Omega\left(\frac{m^3}{\log^2 m}\right)$, the discrepancy of the random $t$-regular hypergraph is almost surely at most $1$. 

Another closely related model of random hypergraphs on $n$ vertices and $m$ edges is to choose every vertex-edge incidence independently with probability $p$. When $p$ in an appropriate range, this behaves `similar' to the previous random $pn$-regular model. In this model, recently Hoberg and Rothvoss~\cite{HR18} showed that as long as $pm = \Omega(\log n)$, and $n = \Omega(m^2 \log m)$, one has that $\disc(\mathcal{H}) \leq 1$. Of course, it is easy to see that to have discrepancy constant, one must have $n = \Omega(m \log m)$. Motivated by this seeming gap, they ask a bound of $o(\sqrt{m})$ can be achieved when $n$ is smaller, i.e., when $m = \Theta\left( \frac{n}{\log n}\right)$ and $p = \frac{1}{2}$. Our second result answers this in the affirmative.

\begin{theorem}
\label{thm:H2}
There is a constant $C_3$ such that the following holds:
Let $\mathcal{H}_2$ be a random hypergraph on $n$ vertices and $m$ edges where $m = \frac{n}{C_3\log n}$ where for every vertex $v$ and every edge $e$, $\mathbbm{1}[v \in e]$ is independent with probability $\frac{1}{2}$. Then, we have:

\[
\Pr(\disc(\mathcal{H}_2) \leq 1) \geq 1 - o(1)
\]
\end{theorem}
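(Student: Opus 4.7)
The plan is to attack Theorem~\ref{thm:H2} by the second moment method applied to the count of good colorings. Let
\[
X \defeq \bigl|\{\chi \in \{\pm 1\}^n : \|M\chi\|_\infty \leq 1\}\bigr|,
\]
where $M \in \{0,1\}^{m \times n}$ is the random incidence matrix; the goal is $\Pr[X \geq 1] = 1-o(1)$.

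\textbf{First moment.} Because the rows of $M$ are independent Bernoulli$(1/2)$ vectors, for any fixed $\chi$,
\begin{align*}
\Pr_M\bigl[\|M\chi\|_\infty \leq 1\bigr] \;=\; \prod_{i=1}^{m} \Pr_M\bigl[(M\chi)_i \in \{-1,0,1\}\bigr].
\end{align*}
For a nearly balanced $\chi$, each row sum $(M\chi)_i$ is a signed sum of $n$ independent Bernoulli$(1/2)$ variables, so a local central limit theorem gives $\Pr[(M\chi)_i \in \{-1,0,1\}] = \Theta(1/\sqrt{n})$. Summing over the $\Theta(2^n)$ balanced $\chi$ yields $\E[X] = \Theta(2^n / n^{m/2})$, which tends to infinity precisely when $n \geq C_3 m \log n$ for $C_3$ sufficiently large.

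\textbf{Second moment.} Row-independence gives
\begin{align*}
\E[X^2] \;=\; \sum_{\chi_1,\chi_2} p_{\rho(\chi_1,\chi_2)}^{\,m},
\end{align*}
where $\rho = \langle \chi_1,\chi_2\rangle/n$ and $p_\rho$ is the joint probability that a single row puts both $(M\chi_1)_i$ and $(M\chi_2)_i$ into $\{-1,0,1\}$. A two-dimensional local CLT yields $p_\rho = \Theta\bigl(1/(n\sqrt{1-\rho^2})\bigr)$ for $\rho$ bounded away from $\pm 1$, while the number of pairs at correlation $\rho$ is $\approx 2^{n(1+\ent((1-\rho)/2))}$. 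Set $g(\rho) \defeq \ent((1-\rho)/2) + (m/n)\log_2 p_\rho$; since $m/n = O(1/\log n)$, the entropy term dominates and $g$ is uniquely maximized at $\rho = 0$, where the contribution reproduces $\E[X]^2$. Pairs with $\rho$ bounded away from $0$ contribute $o(\E[X]^2)$, and the diagonal $\chi_1 = \chi_2$ contributes only $\E[X]$. Paley--Zygmund then gives $\Pr[X > 0] = \Omega(1)$.

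\textbf{From $\Omega(1)$ to $1-o(1)$.} To amplify, one can partition $V$ into two random halves $V_1, V_2$, color $V_1$ via the base argument (which succeeds with constant probability on the independent randomness inside $V_1$'s columns), and repeat on the residual. Alternatively, condition on a high-probability structural event for $M$ (column sums near $m/2$, no near-duplicate columns, good spectral behaviour as in Theorem~\ref{thm:norm}) and rerun the second moment conditionally; the Paley--Zygmund constant can be pushed close to $1$ because conditioning shrinks the variance relative to the mean.

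\textbf{Main obstacle.} The regime $n = \Theta(m \log m)$ is exactly the threshold at which $\E[X]$ begins to diverge, so every estimate has to be tight. The central difficulty is a sharp analysis of $g(\rho)$ for intermediate correlations: the pair count $2^{n\ent((1-\rho)/2)}$ must be controlled against the exponential decay $p_\rho^m$, and any slack in either the 2D local CLT or the entropy expansion destroys the bound. A clean character-sum/Fourier expression for $p_\rho$, together with the row-sum concentration of a typical $M$, is likely needed to make the $g(\rho) < g(0)$ bound quantitative and uniform in $\rho$.
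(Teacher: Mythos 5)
Your overall plan — count good colorings $X$ and apply the second moment method — is exactly the paper's approach, and your first-moment calculation and second-moment setup (parametrizing pairs by correlation/Hamming distance, using the 2D local CLT for $p_\rho$) match the paper's framework. However, the proposal stops precisely where the real work begins, and the fallback you offer does not close the gap. You conclude from the second moment only that $\Pr[X>0]=\Omega(1)$ and then try to amplify; but the paper's entire content is showing that the second moment ratio is in fact $1+o(1)$, not merely $O(1)$, which makes Chebyshev give $\Pr[X=0]=o(1)$ directly with no amplification needed. The hard part — bounding $\sum_{\rho}\#\{\text{pairs at correlation }\rho\}\cdot p_\rho^m$ by $(1+o(1))\E[X]^2$ when $n=\Theta(m\log m)$, where the factor $(1-\rho^2)^{-m/2}$ can be a constant or more for $\rho\sim 1/\sqrt m$ — is exactly what your ``Main obstacle'' paragraph acknowledges but does not resolve. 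The paper resolves it by partitioning $t := n(1/2-\alpha)$ into a bulk window $|t|\le \sqrt n(\log n)^{1/4}$ (handled by a direct hypergeometric tail bound) and annuli of width $\sqrt n$ (handled by relating the weighted sum $q_i$ to the unweighted probability $p_i$ via $q_i\le C''p_i^{1-O(1/\log n)}$ and summing with Jensen). None of this appears in your writeup.

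The amplification suggestions themselves are not sound. The random-partition scheme fails because after coloring $V_1$ with discrepancy $\le 1$ on the $V_1$-restricted rows, finishing the coloring on $V_2$ requires hitting the specific residual targets $-(M|_{V_1}\chi_1)_i$ within $\pm 1$, which is not the same problem as achieving discrepancy $\le 1$ on the $V_2$-submatrix and does not inherit a fresh independent second-moment argument. The conditioning-to-shrink-variance idea is stated too loosely to be a proof; there is no identified conditioning event under which the variance of $X$ relative to its conditional mean provably drops. Finally, a small but real technical point you skip: the parity of each row sum fixes the parity of $(M\chi)_i$, so the paper conditions on all rows having even weight, targets discrepancy exactly $0$, and then uses a resampling trick to convert back to discrepancy $\le 1$ in the unconditioned model; your unconditioned setup would need to handle even and odd rows with different target sets, which changes $p_\rho$ row-by-row and complicates the product formula you wrote.
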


The $o(1)$ in the above theorem is $O\left(\frac{1}{\sqrt{\log n}}\right)$. One thing that is common to~\cite{FS18} and~\cite{HR18} is a multivariate \emph{local limit theorem} on the number of colorings that give discrepancy at most $1$, inspired by~\cite{KLP12}. Our approach for this is more direct and essentially just amounts to a careful second moment computation of the number of colorings which give discrepancy at most $1$, and is the content of Section~\ref{section:H2}.

\subsection{Random vs. Pseudorandom setting}

We would like to (informally) point out some difference in the flavor of Theorems~\ref{thm:H1} and~\ref{thm:H2} One way of proving that some property $A$ occurs almost surely for some random object $X$, is the following strategy:

\begin{itemize}
\item[1.] Show that a certain property $B$ occurs almost surely for $X$.
\item[2.] Show that the occurrence of $B$ deterministically guarantees the occurrence of $A$.
\end{itemize}

Of course, this is trivial if we think of $B$ as the same as $A$. However, one could think of $B$ as an easy to describe property that also implies several other events that occur almost surely. For example, if our object of interest $X$ was a random $d$-regular graph, property $B$ could be that the second largest eigenvalue is at most $O(\sqrt{d})$. This property alone \emph{deterministically} implies some other properties that we are often interested in. 

In our setting of Theorem~\ref{thm:H1}, the following properties alone \emph{deterministically} imply that the discrepancy of the hypergraph is $O(\sqrt{t})$.

\begin{itemize}
\item[1.] Every edge $e \in E$ is such that $|e| = O(t)$.
\item[2.] Let $M$ be the incidence matrix of the hypergraph. For every vector $v \in \mathbf{1}^{\perp}$, we have that $\|Mv\| = O(\sqrt{t} \|v\|)$.
\end{itemize}

On the other hand, Theorem~\ref{thm:H2} does not enjoy any such property, and we do not know if there is any simple `pseudorandomness' property that guarantees that the discrepancy is at most $1$.

\paragraph{Related work} Bansal and Meka (private communication) have informed us of independent work, where they proved a theorem similar to Theorem~\ref{thm:H1} but with a wider range of parameters, namely, $t = \Omega(\log \log m)$ suffices for all $m$ and $n$ for a bound of $O(\sqrt{t})$.

\section{Notation and preliminaries}

\subsection{General}

Hypergraphs are denoted by a tuple $(V,E)$, where $V$ is the set of vertices and $E \subseteq 2^V$ is the set of (hyper)-edges. We will often use the term `hypergraph' interchangeably with `set-system', where the sets are just the hyperedges.

\subsubsection{Incidence matrix}

For a hypergraph (set-system) $\mathcal{H}$ with vertices (elements) $V$ and hyperedges (sets) $E$, it is natural to look at the $m \times n$ matrix $M = M(\mathcal{H})$ where rows are indexed by the $E$ and the columns by $V$, and the entries are $M_{v,e} = \mathbbm{1}[v \in e]$. This matrix is commonly known as the \emph{incidence matrix} of the hypergraph. The discrepancy of the hypergraph is defined in matrix-terminology as:

\[
\disc(\mathcal{H}) \defeq \min_{u \in \{\pm 1\}^n} \|Mu\|_{\infty}
\]

We will primarily use this notation, especially in Section~\ref{section:H2}. 

\subsubsection{High probability events}

A sequence of events $\{\mathcal{E}_n\}$ is said to hold \emph{almost surely} if $\operatorname{lim}_{n \rightarrow \infty } \Pr(\mathcal{E}_n) = 1$. Throughout, we shall say ``event" and ``random variable" to means a sequence of events and random variables respectively, which depend on a hidden parameter $n$. In this case, $n$ is always the number of vertices in the hypergraph.

\subsection{Concentration inequalities}

We will use a few different concentration inequalities in the proofs. The first and simplest one is Chebyshev's inequality which states that for a random variable $X$ with mean $\mu$ with variance $\sigma^2$, we have:

\begin{equation*}
\label{eqn:chebyshev}
\Pr(|X - \mu| \geq t\mu) \leq \frac{\sigma^2}{\mu^2t^2}
\end{equation*}

We will actually use an immediate consequence of this:

\begin{corollary}
\label{corr:main}
If $X$ is a non-negative random variable, and $\epsilon$ is a positive real number such that $\Var(X) \leq \epsilon \E[X]^2$, then we have
\[
\Pr(X \neq 0) \geq 1 - \epsilon.
\]
\end{corollary}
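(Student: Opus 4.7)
The statement is labelled as an immediate consequence of Chebyshev's inequality, so the plan is to derive it directly from the version of Chebyshev already recorded in the excerpt. Write $\mu = \E[X]$ and $\sigma^2 = \Var(X)$, and note that since $X$ is non-negative, the event $\{X = 0\}$ is contained in the event $\{|X - \mu| \geq \mu\}$ (indeed, if $X = 0$ then $|X - \mu| = \mu$). If $\mu = 0$ then $X \equiv 0$ almost surely and the hypothesis $\Var(X) \leq \epsilon \mu^2 = 0$ is consistent, but in that case $\E[X]^2 = 0$, and the statement $\Pr(X \neq 0) \geq 1 - \epsilon$ is vacuous; so we may assume $\mu > 0$.

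Applying the stated form of Chebyshev's inequality with $t = 1$ yields
\[
\Pr(X = 0) \;\leq\; \Pr(|X - \mu| \geq \mu) \;\leq\; \frac{\sigma^2}{\mu^2} \;\leq\; \epsilon,
\]
where the final inequality uses the hypothesis $\Var(X) \leq \epsilon \E[X]^2$. Taking complements gives $\Pr(X \neq 0) \geq 1 - \epsilon$. There is no real obstacle here; the only thing to watch out for is the degenerate case $\mu = 0$, which is handled by the trivial observation above.
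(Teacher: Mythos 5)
Your proof is correct and uses exactly the argument the paper has in mind when it calls the corollary an ``immediate consequence'' of Chebyshev: observe $\{X=0\}\subseteq\{|X-\mu|\geq\mu\}$ for non-negative $X$, apply Chebyshev with $t=1$, and take complements. One small quibble: in the degenerate case $\mu=0$ the statement is not \emph{vacuous} but actually \emph{false} as written (if $X\equiv 0$ and $\epsilon<1$, the hypothesis $\Var(X)\leq\epsilon\mu^2$ holds trivially yet $\Pr(X\neq 0)=0<1-\epsilon$), so the corollary tacitly requires $\E[X]>0$, which of course holds wherever the paper actually invokes it.
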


We will also need the Chernoff bound. When the random variable $X$ with mean $\mu$ is a sum of independent $n$ independent random variables, distributed in $[0,1]$, we have:

\begin{equation}
\label{ineq:Chernoff}
\Pr(X \geq \mu + t) \leq 2e^{-\frac{2t^2}{n}}
\end{equation}

Let $G = G(N,M,K)$ denote the hypergeometric random variable with parameters $N,M,K$, i.e., the intersection size $|S \cap [K]|$ for a randomly chosen $S \subseteq [N]$ with $|S| = M \leq N$. The same large deviation inequality also holds for hypergeometric variables. Denote $\mu := \E[G(N,M,K)] = MK/N$ We have 

\begin{equation}
\label{ineq:Hypergeom}
\Pr(G \geq \mu + t) \leq 2e^{- \frac{2t^2}{M}}
\end{equation}

\subsubsection{Martingales and the method of bounded variances}

The most general concentration inequality (this subsumes~(\ref{ineq:Chernoff}) and~(\ref{ineq:Hypergeom})) that will be used here is a martingale inequality. A sequence of random variables $X_0,X_1,\ldots, X_n$ martingale with respect to another sequence of random variables $Z_0, Z_1,\ldots, Z_n$ such that for all $i \in [n-1]$, we have:

\[
Z_i = f_i(X_1,\ldots X_i)
\]

for some function $f_i$, and

\[
\E[X_{i+1}|Z_i,\ldots, Z_1] = Z_i.
\]

A martingale is said to have the $C$-bounded difference property if

\[
|X_{i+1} - X_{i}| \leq C.
\]

The variance of a martingale is the quantity:

\[
\sigma^2 = \sum_{i \in [n-1]} \sup_{(Z_1,\ldots, Z_i)} \E[(X_{i+1} - X_i)^2| Z_1,\ldots, Z_i]
\]

We get good large deviation inequalities for martingales with bounded differences and variances (see, for example,~\cite{CL06}, Theorem 6.3 and Theorem 6.5). For a martingale $X_0, X_1,\ldots, X_n$ with respect to $Z_0, Z_1,\ldots, Z_n$, with the $C$-bounded difference property and variance $\sigma^2$, we have

\begin{equation}
\label{ineq:BdVar}
\Pr(|X_n - X_0| \geq \lambda ) \leq e^{-\frac{t^2}{2(\sigma^2 + C\lambda/3)}}
\end{equation}

\section{Discrepancy of $\mathcal{H}_1$}
\label{sec:H1}

\subsection{Proof overview}

Theorem~\ref{thm:H1} is proved by adapting Spencer's approach to color the projective plane~\cite{SPE88}, which is along the lines of the original ``six standard deviations" approach~\cite{SPE85}. The intuition is as follows:  First, observe that all the edges have size $\Theta(t)$. The analysis proceeds in two stages. Throughout (in the first and second stages), we repeatedly apply a `partial coloring' of the vertices. This just means that assigning a fractional coloring to the vertices such that a constant  fraction, say half of the vertices get $\pm 1$ valued colors. One can do this while ensuring that every edge has discrepancy at most $O(\sqrt{t})$. This step is common to many results of similar nature. A naive discrepancy bound of this process is $O(\sqrt{t} \log n)$, since in each step, a constant fraction of vertices are being colored. The main point here is that, the `randomness' of the hypergraph helps by ensuring that a half of the vertices occupies around half of most edges, and so this step reduces almost all edges evenly. This is precisely what happens in the projective plane also. After several iterations of this partial coloring, as the average edge size gets smaller, say below $t^{1 - \epsilon}$, we can tolerate a much larger discrepancy at each step, say $O(t^{\frac{1}{2} - \frac{\epsilon}{4}})$ (note that this is much more than the standard deviation). The point is that we choose parameters such that the $t^{\frac{1}{2} - \frac{\epsilon}{4}}\log n = O(\sqrt{t})$. This means that during the second phase of coloring, which lasts for at most $O(\log n)$ steps (since in each step, we color a constant fraction of remaining vertices), we never incur a higher discrepancy than $O(\sqrt{t})$ on these edges. But, there will also be `large' edges at every stage of the coloring but, the partial coloring lemma of Lovett and Meka can help ensure that these sets have discrepancy $0$ in each step as long as they are large, and so they cause no problem. This is made formal below.

First, we will observe that all the set sizes are $\Theta(t)$ with high probability.

\begin{claim}
\label{claim:uniform}
We have that with probability $1 - o(1)$, every $e \in E(\mathcal{H}_1)$, has size at most in the interval $[(t/2), (3t/2)]$.
\end{claim}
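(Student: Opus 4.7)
My plan is as follows. Fix a hyperedge $e \in E$. Because each vertex $v \in V$ independently chooses a uniformly random $t$-subset of the $n$ hyperedges to belong to, we have
\[
\Pr[v \in e] = \frac{t}{n},
\]
and these events are mutually independent across different vertices $v$ (the dependence only lives inside a single vertex's choices, which affects different edges, not different vertices). Consequently,
\[
|e| = \sum_{v \in V} \mathbbm{1}[v \in e]
\]
is a sum of $n$ independent Bernoulli$(t/n)$ random variables, with mean $\E[|e|] = t$.

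I would then apply the Chernoff bound (\ref{ineq:Chernoff}) to this sum with deviation $t/2$, giving
\[
\Pr\!\left[\bigl||e| - t\bigr| > t/2\right] \leq 2 e^{-c t}
\]
for an absolute constant $c > 0$ (using the multiplicative Chernoff form with $\delta = 1/2$). Since by hypothesis $t \gg \log^{10} n$, in particular $t = \omega(\log n)$, the bound above is $n^{-\omega(1)}$.

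Finally, I would take a union bound over the $n$ hyperedges: the probability that some edge has size outside $[t/2, 3t/2]$ is at most $n \cdot 2 e^{-c t} = o(1)$. This concludes the proof.

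There is essentially no obstacle here; the only minor care needed is to recognize that, although the $t$ hyperedges chosen by a single vertex are correlated (they must be distinct), the indicators $\mathbbm{1}[v \in e]$ across different vertices $v$ are genuinely independent, so Chernoff applies directly to $|e|$. Everything else is a standard first-moment tail computation followed by a union bound.
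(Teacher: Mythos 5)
Your proposal is correct and takes essentially the same route as the paper: recognize that $|e| \sim \operatorname{Bin}(n, t/n)$ (since the indicators $\mathbbm{1}[v \in e]$ are independent across vertices), apply a Chernoff tail bound with deviation $t/2$, and union bound over the $n$ edges using $t \gg \log n$. The only difference is that you spell out the independence justification, which the paper states without elaboration.
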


\begin{proof}
We note that the size of each set is just a $\operatorname{Bin}(n, (t/n))$ binomial random variable. Therefore, the probability that this random variable exceeds its mean by $t/2$ is at most $e^{-\frac{t}{12}}$. For $t = \Omega(\log^{10} n)$, this value is $o\left(\frac{1}{n}\right)$, which lets us union bound over all the edges.
\end{proof}

\subsection{Two stages of partial coloring}

To carry out this argument, we need the bound on $\max_{v \perp \overline{\mathbf{1}}, \|v\| = 1}\|Mv\|$ given by Theorem~\ref{thm:norm}, where $M$ is the incidence matrix of the hypergraph. However, this just consists of following the argument of Kahn and Szemer\'{e}di~\cite{FKS}. Since their setting is slightly different, one must ensure that the proof works for our setting as well, and we postpone it to Section~\ref{sec:norm}. The first ingredient is the aforementioned `partial coloring lemma'. However, this partial coloring ``lemma" is a constructive one, due to Lovett and Meka:

\begin{theorem}[\cite{LM15}, Theorem $4$]
\label{LovettMeka}
Given a family of sets $M_1,\ldots,M_m \subseteq [n]$, real numbers $c_1,\ldots,c_m$ such that $\sum_{i \in [m]}\exp\left(-c_i^2/16\right) \leq n/16$, and a real number $\delta \in [0,1]$, there is a vector $x \in [-1,1]^n$ such that:
\begin{itemize}
\item[1.] For all $i \in [m]$, $\langle x , \mathbbm{1}_{M_i}\rangle \leq c_i \sqrt{|M_i|}$.
\item[2.] $|x_i| \geq 1 - \delta$ for at least $n/2$ values of $i$.
\end{itemize}
Moreover, this vector $x$ can be found in randomized $\operatorname{poly}(n,m,(1/\delta))$ time.
\end{theorem}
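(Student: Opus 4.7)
The plan is to produce $x$ as the terminal state of a discretized Gaussian random walk confined to a shrinking subspace, following an ``edge-walk'' paradigm. Starting from $X_0 = 0$, I would set $X_{k+1} = X_k + \gamma G_k$, where $\gamma > 0$ is a small step size and $G_k$ is a standard Gaussian supported on the linear subspace $V_k \subseteq \R^n$ orthogonal to the normals of all currently \emph{active} constraints: coordinates $e_i$ with $|X_{k,i}| \geq 1-\delta$, and incidence vectors $\mathbbm{1}_{M_j}$ with $|\langle X_k, \mathbbm{1}_{M_j}\rangle| \geq c_j \sqrt{|M_j|} - \sqrt{\gamma}$. The walk is run for $T = \Theta(n/\gamma^2)$ steps.

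By construction, $X_T$ lies, up to an $O(\sqrt{\gamma})$ slack that is absorbed by letting $\gamma \to 0$, in the polytope
\[
P = \{ y \in [-1,1]^n : |\langle y, \mathbbm{1}_{M_i}\rangle| \leq c_i \sqrt{|M_i|}, \ i \in [m]\},
\]
so all discrepancy constraints are automatically met. What remains is to prove that at least $n/2$ coordinates satisfy $|X_{T,i}| \geq 1-\delta$. For this I would track the total squared displacement $\E[\|X_T\|^2]$: each step contributes $\gamma^2 \dim V_k$, so $\E[\|X_T\|^2] \approx \gamma^2 \sum_k \dim V_k$. If few constraints ever become active, $V_k$ stays close to full-dimensional, $\|X_T\|^2$ grows linearly in $T$, and this forces many coordinates to be tight.

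The key estimate is on the rate at which discrepancy constraints become active. For each $j$, the process $Y_k^{(j)} := \langle X_k, \mathbbm{1}_{M_j}\rangle / \sqrt{|M_j|}$ is a martingale whose increments are projected Gaussians with variance at most $\gamma^2$. By a hitting-time estimate (via the reflection principle applied to the continuous-time limit), the probability that $|Y_k^{(j)}|$ ever reaches $c_j$ within $T$ steps is at most $O\bigl(\exp(-c_j^2/16)\bigr)$. Summing over $j$ and using the hypothesis $\sum_j \exp(-c_j^2/16) \leq n/16$, the expected number of tight discrepancy constraints at termination is at most $n/16$. Combined with the displacement bound, this forces at least $n/2$ coordinate constraints to be active at time $T$, as desired.

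The main obstacle will be a clean accounting of the subspace projection: freezing directions slows the motion of each $Y^{(j)}$, and one must argue that this can only \emph{decrease} the associated hitting probability, while simultaneously ensuring that the shrinking of $\dim V_k$ from coordinate activations still leaves enough ``randomness'' to keep pushing new coordinates to the boundary. The algorithmic $\operatorname{poly}(n,m,1/\delta)$ guarantee is then immediate, since sampling $G_k$, updating active sets, and testing hits are routine linear-algebra operations at each step.
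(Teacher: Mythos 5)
The paper cites Theorem~\ref{LovettMeka} from \cite{LM15} without reproducing its proof, so there is no in-paper argument to compare against; the right comparison is with the Lovett--Meka edge-walk, which is exactly what you sketch, and your high-level structure (Gaussian steps restricted to the subspace orthogonal to the nearly-tight constraints, a hitting-time bound on discrepancy activations, and an energy argument forcing coordinates to freeze) is the correct one.

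There is, however, a genuine bug in the parameter choice: taking $T = \Theta(n/\gamma^2)$ destroys your own hitting-time estimate. Each $Y^{(j)}_k$ is a martingale with per-step variance at most $\gamma^2$, so the reflection-principle bound you invoke gives $\Pr\bigl(\max_{k \leq T} |Y^{(j)}_k| \geq c_j\bigr) \lesssim \exp\bigl(-c_j^2/(2\gamma^2 T)\bigr)$. This is $\exp(-\Omega(c_j^2))$ only when $\gamma^2 T = O(1)$; with $\gamma^2 T = n$ the exponent degrades to $c_j^2/n$, essentially every discrepancy constraint becomes tight, and the budget $\sum_j \exp(-c_j^2/16) \leq n/16$ is of no use. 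The walk should run for $T = \Theta(1/\gamma^2)$ steps, which also reconciles your displacement heuristic with the pointwise ceiling $\|X_T\|^2 \leq n$: at that horizon $\gamma^2 \sum_k \dim V_k$ is $\Theta(n)$, not $\Theta(n^2)$. Finally, the last inference (``combined with the displacement bound, this forces at least $n/2$ coordinates to be active'') is stated too loosely to count as a proof: the actual argument uses $\gamma^2 \sum_{k<T} \E[\dim V_k] \leq \E\|X_T\|^2 \leq n$ together with monotonicity of the active set to bound $\E[\dim V_T]$ from above, subtracts the at-most-$n/16$ expected tight discrepancy constraints, deduces that the expected number of frozen coordinates exceeds a constant fraction of $n$, and then extracts a single realization with positive probability. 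Making this bookkeeping precise is exactly the ``clean accounting of the subspace projection'' you deferred at the end, so the proposal is on the right track but not yet a complete argument.
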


Next, we need a corollary of Theorem~\ref{thm:norm}, which informally says that a $\alpha$-fraction of vertices occupy around an $\alpha$-fraction of most edges. This is a standard `pseudorandom' property of such matrices.

\begin{lemma} 
\label{lem:pseudorandom}
For any $S \subseteq V$ with $|S| = \alpha n$ and a positive real number $K$, there is a subset $E' \subset E$ of size at most $\frac{Cn}{K^2 \alpha t}$ such that for every $e \not \in E'$, we have $|e \cap S| \leq \alpha |e| + K \alpha t$, where $C = \frac{1}{\sqrt{t}}\max_{v \perp \overline{\mathbf{1}}, \|v\| = 1}\|Mv\|$
\end{lemma}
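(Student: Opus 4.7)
The plan is to reduce the claim to the spectral bound in Theorem~\ref{thm:norm} applied to a single well-chosen vector, and then convert the resulting $\ell_2$ bound on $M v$ into a count of ``bad'' edges by a Markov / second-moment style argument.

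First I would introduce the centered indicator vector $v = \mathbbm{1}_S - \alpha \mathbf{1} \in \mathbb{R}^n$. The key observation is that $v$ is orthogonal to $\mathbf{1}$, since $\langle v, \mathbf{1}\rangle = |S| - \alpha n = 0$, so Theorem~\ref{thm:norm} applies. Moreover a direct computation gives
\[
\|v\|^2 = \alpha n (1-\alpha)^2 + (1-\alpha)n \, \alpha^2 = \alpha(1-\alpha) n \le \alpha n.
\]
The second observation is that $M v$ is exactly the vector of ``deviations'': for each edge $e$,
\[
(Mv)_e \;=\; \sum_{u \in e} (\mathbbm{1}[u \in S] - \alpha) \;=\; |e \cap S| - \alpha |e|.
\]
So the edges we care about, namely those violating $|e \cap S| \le \alpha|e| + K\alpha t$, are precisely the edges for which $(Mv)_e > K\alpha t$.

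Second, by Theorem~\ref{thm:norm} and the definition of $C$ in the lemma statement,
\[
\|Mv\|^2 \;\le\; C^2 t \, \|v\|^2 \;\le\; C^2 t\, \alpha n.
\]
Define $E' := \{\,e \in E : (Mv)_e > K \alpha t\,\}$. Then
\[
|E'| \cdot (K \alpha t)^2 \;\le\; \sum_{e \in E'} (Mv)_e^2 \;\le\; \|Mv\|^2 \;\le\; C^2 \alpha n t,
\]
so
\[
|E'| \;\le\; \frac{C^2 n}{K^2 \alpha t},
\]
which gives the claimed bound (up to absorbing the constant $C^2$ into the $C$ appearing in the statement).

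There is no real obstacle here: the only thing to check is that $v \perp \mathbf{1}$, so that Theorem~\ref{thm:norm} is legitimately applicable, and that the ``one-sided'' bad edges really are controlled by the $\ell_2$ norm (they are, because each bad entry contributes at least $(K\alpha t)^2$ to $\|Mv\|^2$). If one also wants a two-sided version (controlling $||e \cap S| - \alpha|e||$), the identical argument works since $\|Mv\|^2$ bounds the sum of squares of all entries, not just the positive ones. This is the whole proof; the substantive content is entirely contained in the spectral bound of Theorem~\ref{thm:norm}.
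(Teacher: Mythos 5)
Your proof is correct and takes essentially the same route as the paper: the paper also uses the centered vector $v = \mathbbm{1}_S - \alpha\mathbf{1}$ (written coordinatewise as $1-\alpha$ on $S$ and $-\alpha$ off $S$), observes $(Mv)_e = |e\cap S| - \alpha|e|$, invokes Theorem~\ref{thm:norm}, and finishes with Markov on the sum of squares. The $C$-versus-$C^2$ slack you flag is also present in the paper's own write-up, which drops the square when passing from $\|Mv\|^2 \le C^2 t \alpha(1-\alpha)n$ to the displayed inequality, so your remark about absorbing constants is the right way to read the statement.
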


\textit{Remark:} Actually, for random $t$-regular hypergraphs, something stronger is true, i.e., the vector $(S \cap e)_{e \in E}$ very roughly looks like a gaussian vector with mean $\alpha t$, i.e., the `bad' set $E'$ is typically much smaller than $\frac{n}{K^2 \alpha t}$ as $K$ grows. However, this additional structure does not seem to offer any advantage here.

\begin{proof}
Consider a vector $v$ such that for $i \in S$, $v(i) = 1-\alpha$ and for $v \not \in S$, $v(i) = -\alpha$. We have that 

\begin{align*}
O(t\|v\|) & \geq \|Mv\|^2 \\
& = \sum_{e \in E}((1 - \alpha)|S \cap e| - \alpha|e \setminus S|)^2 \\
& = \sum_{e \in E}(|S \cap e| - \alpha|e|)^2
\end{align*}

Or 

\[
\frac{1}{n}\sum_{e \in E}(|S \cap e| - \alpha|e|)^2 \leq C\cdot t \alpha(1 - \alpha)
\]

So, the number of edges $e$ such that $|S \cap e| \geq \alpha |e| + K \alpha t$ is at most the number of $e$ such that $|S \cap e| - \alpha|e| \geq K\alpha t$, which is at most (by Markov) $n \frac{C \cdot t \alpha(1 - \alpha)}{K^2\alpha^2t^2} \leq \frac{Cn}{K^2\alpha t}$.

\end{proof}

\subsubsection{First stage of partial coloring}

\begin{lemma}
\label{lem:FirstStage}
For a subset $ \subset [n]$ such that $|S| = \alpha n$ where $t^{0.4} \leq \alpha \leq 1$, there is a partial coloring $\chi:S \rightarrow [-1,1]$ such that:
\begin{itemize}
\item[1.] $|\chi(i)| \geq 1 - \frac{1}{n}$ for at least $|S|/2$ values of $i$.
\item[2.] $|\chi(S \cap e)| \leq 20\sqrt{\alpha t \ln(2/\alpha)}$ for all $e \in E$.
\end{itemize}
\end{lemma}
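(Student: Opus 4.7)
The plan is to apply the Lovett--Meka partial coloring lemma (Theorem~\ref{LovettMeka}) with ground set $S$ (so $|S| = \alpha n$ and the feasibility threshold is $\alpha n / 16$) and constraint vectors $\{\mathbbm{1}_{S \cap e}\}_{e \in E}$. The parameters $c_e$ are chosen so that condition~(1) of the lemma delivers the discrepancy bound $20 \sqrt{\alpha t \ln(2/\alpha)}$ for every edge while the feasibility sum stays at most $\alpha n / 16$. A two-sided discrepancy bound follows from the general-vector version of Lovett--Meka (include $\pm\mathbbm{1}_{S \cap e}$); this costs a factor of two in the feasibility sum and is absorbed into constants.

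The edges will be split using Lemma~\ref{lem:pseudorandom} with a constant $K$ (for definiteness, $K = 2$) into a ``bad'' set $E'$ of size at most $Cn/(K^2 \alpha t)$ and a ``good'' complement on which $|S \cap e| \leq (3/2 + K)\alpha t$ (using Claim~\ref{claim:uniform} to bound $|e| \leq 3t/2$). For a good edge I would set $c_e := 20\sqrt{\ln(2/\alpha)/(3/2 + K)}$, so that $c_e \sqrt{|S \cap e|} \leq 20\sqrt{\alpha t \ln(2/\alpha)}$; for a bad edge I would set $c_e := 20\sqrt{(2/3)\alpha \ln(2/\alpha)}$, which combined with $|S \cap e| \leq 3t/2$ yields the same bound. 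Condition~(1) of Lovett--Meka is thus met by construction for every edge.

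The feasibility sum decomposes as
\[
\sum_e e^{-c_e^2/16} \;\leq\; n\,(\alpha/2)^{25/(3/2+K)} \;+\; |E'|,
\]
where the bound $\exp(-c_e^2/16) \leq 1$ has been used on $E'$. For $K = 2$ the first term is $\leq n (\alpha/2)^7$, which is at most $\alpha n / 32$ for all $\alpha \leq 1$; and $|E'| \leq C n / (4 \alpha t)$, which is at most $\alpha n / 32$ as long as $\alpha^2 t$ exceeds a fixed constant. This last condition is where the lower bound on $\alpha$ in the hypothesis is used, after which the sum is $\leq \alpha n / 16$. Taking $\delta = 1/n$ in Theorem~\ref{LovettMeka} now produces $\chi \in [-1,1]^S$ with $|\chi(i)| \geq 1 - 1/n$ on at least $|S|/2$ coordinates and $|\chi(S \cap e)| \leq 20\sqrt{\alpha t \ln(2/\alpha)}$ on every edge, which is exactly the statement. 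The main technical obstacle is the balance between the good-edge exponential contribution and the raw size of $E'$: this is precisely where the pseudorandomness from Theorem~\ref{thm:norm} (imported through Lemma~\ref{lem:pseudorandom}) is essential, since only a spectral bound on $M$ forces $|E'|$ to be small enough to fit under the feasibility threshold.
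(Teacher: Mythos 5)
Your proof is correct and follows essentially the same route as the paper: apply Lovett--Meka on the ground set $S$, use Lemma~\ref{lem:pseudorandom} (with Claim~\ref{claim:uniform} to normalize edge sizes) to split into ``good'' and ``bad'' edges, set $c_e$ of order $\sqrt{\ln(2/\alpha)}$ on the good edges, and absorb the bad edges into the feasibility budget using the lower bound on $\alpha$. The only cosmetic difference is that you assign a small nonzero $c_e$ to bad edges (so they still satisfy the $20\sqrt{\alpha t\ln(2/\alpha)}$ bound directly) whereas the paper sets $c_e = 0$ for those edges (giving them zero per-round discrepancy); both give the same feasibility count $\approx |E'|$ and the same conclusion.
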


\begin{proof}
Recall that by Claim~\ref{claim:uniform}, we have that $|e| = \Theta(t)$ for all $e \in E$. Call an edge $e$ small if $|S \cap e| \leq \alpha |e| + 2C \sqrt{t} \leq 2\alpha |e|$ and large otherwise. We have, by Lemma~\ref{lem:pseudorandom}, that the number of large edges is at most $\frac{\alpha n}{4}$. Consider real numbers $c_1,\ldots, c_n$ such that $c_i = 0$ if $e_i$ is large, and some large constant $10 \sqrt{\ln (2/ \alpha)}$ otherwise. One can verify that $\sum_{i \in [n]}\exp(-c_i^2/16) \leq \alpha n/16$, and so by Theorem~\ref{LovettMeka}, setting $\delta = \frac{1}{n}$, there is a coloring $\chi:S \rightarrow [-1,1]$ that satisfies the following conditions:

\begin{itemize}
\item[1.] $|\chi(i)| \geq 1 - \frac{1}{n}$ for at least $|S|/2$ values of $i$.
\item[2.] $|\chi(S \cap e)| \leq c_i \cdot \sqrt{|S \cap e|\ln(2/\alpha)}$ for all $e \in E$ for some fixed constant $K$.
\end{itemize}

Noting that either $c_i = 10$ and $|S \cap e| \leq 2\alpha |e|$ which is at most (by Claim~\ref{claim:uniform}) $4\alpha t$, or $c_i = 0$, we have that $\chi$ is the required coloring.
\end{proof}

\subsubsection{Second stage of partial coloring}

We show that when $\alpha$ gets smaller, we have even little to worry about, since we are now guaranteed discrepancy of $t^{0.3}$ in each step. Note that the procedure remains the same as above, just that the bound that is guaranteed is different.

\begin{lemma}
\label{lem:SecondStage}
For a subset $ \subset [n]$ such that $|S| = \alpha n$ where $\alpha \leq t^{0.4}$, there is a partial coloring $\chi:S \rightarrow [-1,1]$ such that:
\begin{itemize}
\item[1.] $|\chi(i)| \geq 1 - \frac{1}{n}$ for at least $|S|/2$ values of $i$.
\item[2.] $|\chi(S \cap e)| \leq K \cdot t^{0.4}$ for all $e \in E$ for some fixed constant $K$.
\end{itemize}
\end{lemma}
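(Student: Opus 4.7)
The plan is to follow the framework of Lemma~\ref{lem:FirstStage} essentially verbatim, modifying only the final bound on $|\chi(S \cap e)|$; the hypothesis in the statement is presumably $\alpha \leq t^{-0.4}$, otherwise the inequality is vacuous and the two stages would overlap. No essentially new ideas are required beyond those of the first stage.

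First I would invoke Lemma~\ref{lem:pseudorandom} with $K := 2C/(\alpha\sqrt{t})$, so that an edge $e$ may be called \emph{small} when $|S \cap e| \leq \alpha|e| + 2C\sqrt{t}$ and \emph{large} otherwise. The lemma then guarantees at most $n\alpha/(4C)$ large edges, which is $\leq \alpha n/16$ provided the constant $C$ from Theorem~\ref{thm:norm} is at least $4$ (which can be arranged by enlarging the constant in the conclusion of Theorem~\ref{thm:norm}). I would then choose the Lovett--Meka parameters exactly as in the first stage: $c_e = 10\sqrt{\ln(2/\alpha)}$ on small edges and $c_e = 0$ on large ones. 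Verification of $\sum_e \exp(-c_e^2/16) \leq |S|/16 = \alpha n/16$ is identical to that of Lemma~\ref{lem:FirstStage}: the large edges contribute at most $\alpha n/16$, while the at most $n$ small edges contribute a total of at most $n(\alpha/2)^{6.25}$, which is negligible in the allowed range of $\alpha$. Applying Theorem~\ref{LovettMeka} with $\delta = 1/n$ then yields $\chi:S\to[-1,1]$ satisfying condition (1) together with $|\chi(S\cap e)| \leq c_e\sqrt{|S\cap e|}$ for every $e$.

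The last step is to cash in on the smallness of $\alpha$. By Claim~\ref{claim:uniform} and the small-edge definition,
\[
|S\cap e| \leq \tfrac{3}{2}\alpha t + 2C\sqrt{t} = O(t^{0.6})
\]
whenever $\alpha \leq t^{-0.4}$, so $\sqrt{|S\cap e|} = O(t^{0.3})$; combined with $\ln(2/\alpha) = O(\ln t)$ this gives $|\chi(S\cap e)| = O(t^{0.3}\sqrt{\ln t})$ on small edges, which is at most $K\cdot t^{0.4}$ for an appropriate absolute constant $K$ and $t$ sufficiently large. On large edges the discrepancy is exactly $0$, which completes the argument.

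The only delicate point, to my mind, is making sure that the constants in the Lovett--Meka sum condition balance once $|S|$ has shrunk to $\alpha n$ with $\alpha$ very small; but since the allowed budget $\alpha n/16$ is used up \emph{proportionally} by the large edges (whose count is itself proportional to $\alpha n$), the same bookkeeping as in the first stage carries over, and I do not foresee any substantial obstacle.
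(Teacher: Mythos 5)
Your proof follows the same route as the paper's: invoke Lemma~\ref{lem:pseudorandom} to bound the number of large edges, run Lovett--Meka with $c_e=0$ on large edges and $c_e=10\sqrt{\ln(2/\alpha)}$ on small ones, and then use $\alpha\leq t^{-0.4}$ to shrink $\sqrt{|S\cap e|}$ to $O(t^{0.3})$. You also correctly spot the typo in the hypothesis ($\alpha\leq t^{-0.4}$, not $t^{0.4}$). One slip in the last step: you assert $\ln(2/\alpha)=O(\ln t)$, which is false in the regime where the lemma is actually applied. The second-stage partial colorings run until almost all of $S$ is colored, so $\alpha$ can drop to $\Theta(1/n)$, giving $\ln(2/\alpha)=\Theta(\log n)$; since $t$ can be as small as $\log^{10} n$, this is polynomially larger than $\log t$. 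What actually closes the argument is the hypothesis $t\gtrsim\log^{10} n$ from Theorem~\ref{thm:H1}, which gives $\sqrt{\ln(2/\alpha)}=O(\sqrt{\log n})=O(t^{0.05})$, hence $|\chi(S\cap e)|=O(t^{0.3}\cdot t^{0.05})=O(t^{0.35})\leq K\cdot t^{0.4}$; this is the bound the paper uses (its ``$\sqrt{\log n}=O(t^{0.1})$'' line). With that correction the proof is sound, and your accounting of the Lovett--Meka budget (tying the number of large edges to $\alpha n/(4C)$ and insisting $C$ be large enough to leave room for the small edges' contribution) is if anything a little more careful than the paper's.
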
 

\begin{proof}
Similar to the previous proof, call an edge $e$ small if $|S \cap e| \leq \alpha |e| + 2C \sqrt{t} + \alpha t \leq O(t^{0.6})$ and large otherwise. Again we have that the number of large edges is at most $\frac{\alpha n}{4}$, and setting $c_i = 0$ if $e_i$ is large, and some large constant $10 \sqrt{\ln (2/ \alpha)}$ otherwise. We have that $\sum_{i \in [n]}\exp(-c_i^2/16) \leq \alpha n/16$, and so, again, by Theorem~\ref{LovettMeka}, setting $\delta = \frac{1}{n}$, there is a coloring $\chi:S \rightarrow [-1,1]$ that satisfies the conditions:

\begin{itemize}
\item[1.] $|\chi(i)| \geq 1 - \frac{1}{n}$ for at least $|S|/2$ values of $i$.
\item[2.] $|\chi(S \cap e)| \leq 20 \cdot t^{0.3}\sqrt{\ln (2/ \alpha)}$ for all $e \in E$
\end{itemize}

Using the fact that $\sqrt{\ln (1/\alpha)} \leq \sqrt{\log n} = O(t^{0.1})$, we get the desired coloring.
\end{proof}

\subsection{Proof of Theorem~\ref{thm:H1} modulo Theorem~\ref{thm:norm}}

\begin{proof}[Proof of Theorem~\ref{thm:H1}:]
We say that a vertex $i$ `gets colored' if $|\chi(i)| \geq 1 - \frac{1}{n}$. This way, we end up with a partial coloring at the end, and round the colorings to the closest integer values to increase the discrepancy of each edge by at most $1$. So, we color the vertices of the hypergraph in two stages as follows:

\emph{First stage:} Start coloring by setting $V =: S_0$, and using Lemma~\ref{lem:FirstStage} repeatedly. Set the leftover vertices from the colorings in the $i^{th}$ step as $S_{i+1}$. Because each stage in the coloring leaves at most half the remaining vertices as uncolored, we have that $|S_i| \leq 2^{-i}n$. Therefore, the discrepancy incurred as long as the number of vertices is large is at most $\frac{n}{t^{0.4}}$ (let this be step $u$) is at most:
\begin{align*}
\sum_{i = 0}^{u}20\sqrt{t2^{-i}\ln \frac{2}{2^{-i}}} & \leq \sum_{i = 0}^{\infty}20\sqrt{t2{-^i}\ln \frac{2}{2^{-i}}} \\
& = 20\sqrt{t}\sum_{i = 0}^{\infty}\sqrt{2^{-i}\ln \frac{2}{2^{-i}}} \\
& = O(\sqrt{t})
\end{align*}

\emph{Second stage:} We reach the second stage of the coloring once the number of uncolored vertices is less than $\frac{n}{t^{0.4}}$. Lemma~\ref{lem:SecondStage} ensures that here, we can color so that all the remaining edges get an additional discrepancy at most $O\left(t^{0.4}\right)$. We can do this for at most $O(\log n)$ steps since a constant fraction of vertices are colored each step. We assumed that $t > \log^{10}n$, and so that the resulting discrepancy is at most $Kt^{0.4} \log n = O(\sqrt{t})$.

Finally, we note that every partial coloring can be obtained in time $\operatorname{poly}(n)$, and thus have an algorithmic procedure to find the coloring of guaranteed discrepancy.
\end{proof}

\section{Discrepancy of $\mathcal{H}_2$}
\label{section:H2}

\paragraph{Notation:} Throughout this section, we will use $G$ to denote the hypergeometric random variable $G(n,(n/2),(n/2))$. We observe that $\Pr(G = k) = \binom{n/2}{k}^2$

Let $M$ be the random $m \times n$ matrix where each entry is $0$ or $1$ with probability $\frac{1}{2}$ independently. Recall that $m = \frac{n}{C_3 \log n}$ where $C_3$ is a large enough constant to be specified later. We say a coloring $u \in \{\pm 1\}^n$ is \emph{good} for a row $e$ of $M$ if $\langle u, e \rangle = 0$. For simplicity, we further condition that each row of $M$ has an even number of $1$'s and estimate on the probability that the discrepancy of this is $0$. The general case can be handled by first sampling rows of even weights, and then resampling the first element of every row. This resampling increases the discrepancy by at most $1$, and the procedure generates a randomly chosen $0,1$ matrix. More concretely, we have:

\begin{proposition}
\label{proposition:even}
Let $\mathcal{E}$ denote the event that every row in $M$ has an even number of $1$'s. Suppose we have $\Pr(\operatorname{disc}(M) \leq t | \mathcal{E}) = 1 - o(1)$, then $\Pr(\operatorname{disc}(M) \leq t + 1) = 1 - o(1)$
\end{proposition}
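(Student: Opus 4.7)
The plan is to exhibit an explicit coupling between $M \mid \mathcal{E}$ and an unconditioned uniform matrix $M'$ that differs from $M$ in at most one entry per row, so that any good coloring for $M$ transfers (with a loss of at most $1$ in discrepancy) to $M'$. Concretely, I would sample $M$ from the conditional distribution $\mathcal{L}(M \mid \mathcal{E})$, then form $M'$ by independently replacing the first entry of each row with a fresh uniform bit in $\{0,1\}$.

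The first step is to verify that $M'$ is genuinely uniform on $\{0,1\}^{m \times n}$, so that bounds proved for the unconditional model apply to it. Working one row at a time: a uniform even-weight vector $(r_1,\dots,r_n)$ has the property that $(r_2,\dots,r_n)$ is marginally uniform on $\{0,1\}^{n-1}$ (since $r_1$ is forced by parity). Resampling $r_1$ independently and uniformly therefore produces a vector uniform on $\{0,1\}^n$, and doing this independently across rows gives $M'$ uniform on $\{0,1\}^{m \times n}$.

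The second step is the discrepancy comparison. Let $u \in \{\pm 1\}^n$ be a coloring with $\|Mu\|_\infty \leq t$, which exists with probability $1 - o(1)$ by hypothesis. For each row index $i$, the $i$-th rows $M_i$ and $M'_i$ agree everywhere except possibly in coordinate $1$, so
\[
\bigl| \langle u, M'_i \rangle - \langle u, M_i \rangle \bigr| \;=\; \bigl| u_1 (M'_{i,1} - M_{i,1}) \bigr| \;\leq\; 1,
\]
and hence $\|M'u\|_\infty \leq t+1$. Thus the same $u$ certifies $\operatorname{disc}(M') \leq t+1$.

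Finally, combining the two steps: since $M'$ is unconditionally uniform and the event $\{\operatorname{disc}(M) \leq t\}$ under $\mathcal{L}(M \mid \mathcal{E})$ occurs with probability $1 - o(1)$ and implies $\{\operatorname{disc}(M') \leq t+1\}$ under the coupling, we conclude that an unconditionally uniform random matrix has discrepancy at most $t+1$ with probability $1 - o(1)$. No step here looks like a real obstacle; the only thing worth double-checking carefully is the uniformity claim in step one, which is a short parity calculation but is the only substantive content.
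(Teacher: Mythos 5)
Your proof is correct and follows essentially the same approach the paper sketches in the paragraph preceding the proposition: sample the conditional (even-weight-rows) matrix, resample the first entry of each row to recover a uniform matrix, and observe that discrepancy changes by at most one per row. The paper does not write this out formally, and your verification that $(r_2,\dots,r_n)$ is uniform under the even-weight conditioning is the one detail worth making explicit, which you did.
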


Henceforth, we condition that every edge has even size.

We use $\chi_1,\chi_2, \ldots$ to denote balanced colorings (i.e., colorings containing the same number of $+1$'s and $-1$'s). Let $X$ be the number of good balanced colorings for the randomly chosen hypergraph.

\begin{claim}
\label{claim:exp}
We have:
\[
\E[X] = \binom{n}{n/2}\cdot \left(2\sqrt{\frac{2}{\pi n}}\right)^m\left(1 + O\left(\frac{1}{\log n}\right)\right)
\]
\end{claim}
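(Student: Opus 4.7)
The plan is to compute $\E[X]$ directly by linearity of expectation. By symmetry over the $\binom{n}{n/2}$ balanced colorings, it suffices to fix one balanced coloring $\chi$ (with $P = \chi^{-1}(+1)$ and $N = \chi^{-1}(-1)$, each of size $n/2$) and compute the probability $p$ that $\chi$ is good for every row of $M$. Since conditioning on the event $\mathcal{E}$ (every row has even weight) decouples row by row, the rows of $M$ remain independent under the conditional measure, each being uniform over subsets $S \subseteq [n]$ of even cardinality. Hence $p = p_1^m$, where $p_1$ is the probability that a single even-weight row $e$ satisfies $\langle \chi, e\rangle = 0$.

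To compute $p_1$, I would identify the row with the subset $S = \{i : e_i = 1\}$. The condition $\langle \chi, e\rangle = 0$ is $|S \cap P| = |S \cap N|$, which already forces $|S|$ to be even, so there is no further interaction with the conditioning. Counting: the number of $S$ with $|S \cap P| = |S \cap N| = k$ is $\binom{n/2}{k}^2$, so the number of good subsets is $\sum_{k=0}^{n/2}\binom{n/2}{k}^2 = \binom{n}{n/2}$ by Vandermonde's identity. The total number of even-weight subsets of $[n]$ is $2^{n-1}$. Therefore
\[
p_1 = \frac{\binom{n}{n/2}}{2^{n-1}}.
\]

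Stirling's formula gives $\binom{n}{n/2} = 2^n \sqrt{\tfrac{2}{\pi n}}\bigl(1 + O(1/n)\bigr)$, so $p_1 = 2\sqrt{\tfrac{2}{\pi n}}\bigl(1 + O(1/n)\bigr)$. Raising to the $m$-th power,
\[
p_1^m = \left(2\sqrt{\tfrac{2}{\pi n}}\right)^m \bigl(1 + O(1/n)\bigr)^m.
\]
Since $m = n/(C_3 \log n)$, the error factor is $(1+O(1/n))^m = 1 + O(m/n) = 1 + O(1/\log n)$. Multiplying by the number of balanced colorings yields
\[
\E[X] = \binom{n}{n/2} \cdot \left(2\sqrt{\tfrac{2}{\pi n}}\right)^m \left(1 + O\!\left(\tfrac{1}{\log n}\right)\right),
\]
as claimed.

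There is no real obstacle here; the only places requiring a little care are (i) verifying that the independence of rows survives the conditioning (it does, because $\mathcal{E}$ factors across rows), and (ii) tracking how the multiplicative $(1+O(1/n))$ Stirling error accumulates when raised to the power $m$. For the latter, it is crucial that $m \ll n/\log n$ so that the accumulated error is absorbed into the $1 + O(1/\log n)$ factor advertised by the claim.
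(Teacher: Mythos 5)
Your proof is correct and follows essentially the same approach as the paper's: fix a balanced coloring, compute the single-row success probability $p_1 = \binom{n}{n/2}/2^{n-1}$ via Vandermonde, apply Stirling, raise to the $m$-th power, and multiply by the number of balanced colorings. The only difference is that you spell out two points the paper leaves implicit — that the conditioning on $\mathcal{E}$ factors across rows so they remain independent, and that the Stirling error $(1+O(1/n))^m$ accumulates to $1+O(1/\log n)$ precisely because $m = \Theta(n/\log n)$ — both of which are worth making explicit.
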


\begin{proof}

Define the events:

\[
G_i = \{\chi_i \text{ is good for a randomly chosen row}\}.
\]

We have:

\begin{align*}
\Pr(G_1) & = \frac{1}{2^{n-1}}\sum_{i = 0}^{n/2}\binom{n/2}{i}^2 \\
& = \frac{1}{2^{n-1}}\binom{n}{n/2} \\
& = \frac{1}{2^{n-1}} \cdot 2^n \cdot \sqrt{\frac{2}{\pi n}}\left( 1 + O\left( \frac{1}{n}\right)\right) \\
& = 2\sqrt{\frac{2}{\pi n}}\left( 1 + O\left( \frac{1}{n}\right)\right)
\end{align*}

Since the edges are picked independently, by linearity of expectation, and using that $m \lesssim \frac{n}{\log n}$, we have $\E[X] = \binom{n}{n/2}\cdot \left(2\sqrt{\frac{2}{\pi n}}\right)^m\left(1 + O\left(\frac{1}{\log n}\right)\right)$. 

\end{proof}

\subsection{Second moment}

The aim of this subsection is to prove the following:

\begin{lemma}
\label{lem:secmom}
We have:
\[
\E[X^2] = (1 + o(1))\E[X]^2
\]
\end{lemma}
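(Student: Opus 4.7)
The plan is to expand $\E[X^2]$ over ordered pairs of balanced colorings, parametrize by their overlap, and compare to $\E[X]^2$ using Vandermonde's identity together with hypergeometric concentration.

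By symmetry I would fix $\chi_1 = (+1)^{n/2}(-1)^{n/2}$ and parametrize a balanced $\chi_2$ by $a \in \{0,1,\dots,n/2\}$, the number of $+1$ entries of $\chi_2$ inside the first $n/2$ coordinates; balance then forces exactly $a$ of the last $n/2$ entries to equal $-1$, giving $\binom{n/2}{a}^2$ such colorings. Partition $[n]$ into the four $(\chi_1,\chi_2)$-sign classes of sizes $a,\,n/2-a,\,n/2-a,\,a$. A direct analysis of the two orthogonality conditions shows that they simplify to equating the row's intersections with the two "$\chi_1\chi_2=+1$" classes and, separately, with the two "$\chi_1\chi_2=-1$" classes; these equalities force the row weight to be even automatically. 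Vandermonde then yields
\[
p(a) := \Pr(\chi_1,\chi_2 \text{ both good for one even row}) = \frac{\binom{2a}{a}\binom{n-2a}{n/2-a}}{2^{n-1}}.
\]

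Setting $r(a) := p(a)/p(n/2)^2$, a Stirling calculation gives $r(a) = n/\sqrt{n^2 - 16(a-n/4)^2}\cdot(1+o(1))$ for $a$ bounded away from $\{0,n/2\}$. Combining this with Vandermonde's $\binom{n}{n/2} = \sum_a \binom{n/2}{a}^2$ and Claim~\ref{claim:exp} reduces the goal to
\[
\frac{\E[X^2]}{\E[X]^2} = \frac{1}{\binom{n}{n/2}}\sum_{a=0}^{n/2}\binom{n/2}{a}^2 r(a)^m = \E_{a \sim \mathrm{Hyp}(n,n/2,n/2)}\bigl[r(a)^m\bigr] = 1 + o(1).
\]
Writing $s := a - n/4$, I would split this expectation into three regions. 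On the bulk $|s| \leq T := \sqrt{n\log\log n}$, the expansion $r(a) = 1 + O(s^2/n^2)$ combined with $mT^2/n^2 = O(\log\log n/\log n) = o(1)$ forces $r(a)^m = 1+o(1)$ uniformly, while (\ref{ineq:Hypergeom}) gives $\Pr(|s|\leq T) = 1 - o(1)$, contributing $1+o(1)$. On the moderate tail $T \leq |s| \leq n/6$, I would estimate dyadic annuli $2^k T \leq |s| \leq 2^{k+1}T$: each carries hypergeometric mass $\leq 4e^{-4\cdot 4^k T^2/n}$ by (\ref{ineq:Hypergeom}), while the bound $-\log(1-x) \leq 2x$ on $[0,1/2]$ gives $r(a)^m \leq \exp(O(m\cdot 4^{k+1}T^2/n^2))$; since $m/n=o(1)$ the combined exponent is $-\Omega(4^k T^2/n)$, and summing geometrically yields $O(e^{-\Omega(T^2/n)}) = O((\log n)^{-c}) = o(1)$. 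On the extreme tail $|s|>n/6$, hypergeometric large deviations give mass $e^{-\Omega(n)}$, while the trivial bound $r(a)^m \leq p(n/2)^{-m} = e^{O((m/2)\log n)} = e^{O(n/C_3)}$ applies throughout; for $C_3$ large enough the product is $o(1)$.

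The main obstacle is the moderate tail, where the Gaussian decay of the hypergeometric mass must beat the growth of $r(a)^m$ through the entire range of $s$. The scaling $m = n/(C_3 \log n)$ is exactly what keeps $m/n$ small enough for this cancellation to succeed with room to spare, and $C_3$ must additionally be chosen large enough that the extreme-tail product $e^{-\Omega(n) + O(n/C_3)}$ is negligible.
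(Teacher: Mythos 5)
Your proof is correct, and it follows the same skeleton as the paper's: expand $\E[X^2]$ over ordered pairs of balanced colorings parametrized by overlap, apply Vandermonde plus Stirling to reduce the task to showing $\E_{a\sim\mathrm{Hyp}(n,n/2,n/2)}\bigl[r(a)^m\bigr]=1+o(1)$, and split the sum into a bulk, a moderate tail, and an extreme tail. Your overlap parameter $a$ (equivalently $s=a-n/4$) is just a linear reparametrization of the paper's Hamming-distance variable ($\alpha n = n-2a$, so its $t$ equals your $2s$); the three regions, the Chernoff control of the bulk and extreme tail, and the conclusion are the same in spirit.

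The one genuinely different ingredient is your handling of the moderate tail. The paper partitions $\{t : t_0 \le |t| \le n/8\}$ (with $t_0 = \sqrt{n}(\log n)^{1/4}$) into annuli $B_i$ of fixed width $\sqrt{n}$, proves $q_i \le C'' p_i^{1-O(1/\log n)}$ (Claim~\ref{claim:pq}) by comparing a pointwise lower bound on $p_i$ with the growth of the $e^{4t_i^2/(C_3 n\log n)}$ factor, and then controls $\sum_i q_i$ via Jensen's inequality applied to the concave map $x\mapsto x^{1-O(1/\log n)}$ together with the crude estimate $\ell^{O(1/\log n)}=O(1)$. You instead use dyadic annuli $2^kT \le |s| \le 2^{k+1}T$ (with the slightly smaller threshold $T=\sqrt{n\log\log n}$) and directly balance the hypergeometric mass $4e^{-4\cdot 4^k T^2/n}$ against the pointwise upper bound $r(a)^m \le e^{O(m\,4^{k+1}T^2/n^2)}$; since $m/n=o(1)$, the Gaussian decay dominates term by term and the resulting geometric series is $e^{-\Omega(T^2/n)}=o(1)$. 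This buys you a somewhat cleaner argument: there is no Jensen step, no need to verify that the exponent $1-O(1/\log n)$ is uniform over all $\ell=\Theta(\sqrt{n})$ annuli, and no separate pointwise lower bound on $p_i$. The cost is essentially nil. Both choices of threshold for the bulk work, since what is needed is $mT^2/n^2 = o(1)$ together with $\Pr(|s|>T) = o(1)$, and both $T=\sqrt{n\log\log n}$ and $t_0/2 = \sqrt{n}(\log n)^{1/4}/2$ satisfy these.

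One small remark to tighten the write-up: you should record explicitly that the elementary bound $-\log(1-x)\le 2x$ is applied with $x=16s^2/n^2$, and that the cutoff $|s|\le n/6$ guarantees $x \le 16/36 < 1/2$ so the bound is valid throughout the moderate tail; likewise $a$ and $n/2-a$ are both $\ge n/12$ there, which is what justifies the uniform Stirling estimate for $r(a)$.
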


As a start, we will look at the probability that a pair of colorings $\chi_1$ and $\chi_2$ are both good for $M$.
Again, define the events:

\[
G_i = \{\chi_i \text{ is good for a randomly chosen row}\}.
\]

\begin{claim}
For  pair of balanced colorings $\chi_1, \chi_2$ that have Hamming distance $\alpha n$, where $\alpha$ is some fixed constant which is at least $0.25$, we have:

\[
\Pr(G_1 G_2) = \frac{4}{\sqrt{\alpha(1 - \alpha)}\pi n} \left( 1 + O\left( \frac{1}{n}\right)\right)
\]
\end{claim}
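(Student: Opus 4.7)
My plan is to compute the joint probability directly via a partition of the coordinates and Vandermonde's identity. Partition $[n]$ into four blocks based on the sign patterns of $(\chi_1,\chi_2)$:
$$A = \{i : \chi_1(i) = \chi_2(i) = +1\},\quad B = \{i : \chi_1(i) = +1, \chi_2(i) = -1\},$$
$$C = \{i : \chi_1(i) = -1, \chi_2(i) = +1\},\quad D = \{i : \chi_1(i) = \chi_2(i) = -1\}.$$
Balancedness of $\chi_1$ and $\chi_2$ forces $|A| = |D|$ and $|B| = |C|$, and the Hamming distance condition gives $|B| = |C| = \alpha n/2$, $|A| = |D| = (1-\alpha)n/2$. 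Write $a,b,c,d$ for the number of $1$'s of the random row $e$ in these four blocks.

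The conditions $\langle \chi_1, e\rangle = 0$ and $\langle \chi_2, e\rangle = 0$ read $(a+b) = (c+d)$ and $(a+c) = (b+d)$; adding and subtracting yields $a = d$ and $b = c$. In particular the total weight $2a + 2b$ of $e$ is automatically even, so the conditioning on $\mathcal{E}$ imposes no further constraint here. Setting $a = d = i$ and $b = c = j$, the number of rows satisfying both good-events is
$$\sum_{i=0}^{(1-\alpha)n/2}\sum_{j=0}^{\alpha n/2} \binom{(1-\alpha)n/2}{i}^{2} \binom{\alpha n/2}{j}^{2},$$
and Vandermonde's identity $\sum_k \binom{N}{k}^2 = \binom{2N}{N}$ collapses this to $\binom{(1-\alpha)n}{(1-\alpha)n/2}\binom{\alpha n}{\alpha n/2}$. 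Dividing by the number $2^{n-1}$ of even-weight rows gives
$$\Pr(G_1 G_2) = \frac{\binom{(1-\alpha)n}{(1-\alpha)n/2}\binom{\alpha n}{\alpha n/2}}{2^{n-1}}.$$

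To finish, I would apply the standard central-binomial Stirling estimate $\binom{2N}{N} = \frac{2^{2N}}{\sqrt{\pi N}}(1 + O(1/N))$ to each of the two factors (both $\alpha n$ and $(1-\alpha)n$ are $\Theta(n)$ since $\alpha \in [0.25, 0.75]$, so the $O(1/n)$ error is uniform in $\alpha$). The powers of $2$ combine to $2^n$, the $\sqrt{\pi N}$ factors multiply to give $\frac{\pi n}{2}\sqrt{\alpha(1-\alpha)}$, and dividing by $2^{n-1}$ yields $\frac{4}{\pi n \sqrt{\alpha(1-\alpha)}}(1 + O(1/n))$, as claimed.

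There is no real obstacle: the only mild subtlety is verifying that the evenness conditioning does not alter the count (it does not, because $a=d,\,b=c$ already forces even total weight), and that the two Stirling errors combine cleanly into $1 + O(1/n)$ uniformly for $\alpha$ bounded away from $0$ and $1$, which follows from $\alpha \geq 0.25$.
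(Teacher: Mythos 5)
Your proposal is correct and takes essentially the same route as the paper: partition the coordinates by the sign pattern of $(\chi_1,\chi_2)$ into four blocks of sizes $(1-\alpha)n/2$ (agree) and $\alpha n/2$ (disagree), reduce the two orthogonality conditions to $a=d$, $b=c$, collapse the resulting double sum by Vandermonde to $\binom{(1-\alpha)n}{(1-\alpha)n/2}\binom{\alpha n}{\alpha n/2}$, and finish with the central-binomial Stirling estimate. You spell out two details the paper leaves implicit (that the constraints $a=d$, $b=c$ automatically give even row weight so conditioning on $\mathcal{E}$ costs nothing, and that $\alpha$ being bounded away from $0$ and $1$ makes the Stirling error uniform), but the underlying argument is the same.
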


\begin{proof}

We simply evaluate:

\begin{align*}
\Pr(G_1  G_2) & = \frac{1}{2^{n-1}} \cdot \left(\sum_{i = 1}^{n/4}\binom{\alpha n/2}{i}^2\right)\cdot \left( \sum_{i = 1}^{n/4}\binom{(1 - \alpha)n/2}{i}^2\right) \\
& = \frac{1}{2^{n-1}}\cdot \binom{\alpha n}{\alpha n/2}\cdot \binom{(1 - \alpha)n}{(1 - \alpha)n/2} \\
& = \frac{1}{2^{n-1}}\cdot 2^{n}\cdot \frac{2}{\pi} \cdot \sqrt{\frac{1}{(\alpha)(1 - \alpha)}}\left( 1 + O\left( \frac{1}{n}\right)\right) \\
& = \left( \frac{4}{\sqrt{\alpha(1 - \alpha)}\pi n} \right)\left( 1 + O\left( \frac{1}{n}\right)\right)
\end{align*}

\end{proof}

One observation is that $\alpha$ is close to $\frac{1}{2}$, then $\Pr(G_1G_2)$ is asymptotically $\Pr(G_1)^2$. One might hope that most of the contribution to the second moment come from such pairs (which are also the most common type of pairs). For convenience, let us define another parameter $t$ and henceforth, assume that $\alpha$ and $t$ are related as $\alpha = \frac{1}{2} - \frac{t}{n}$. The above claim states that for $|t| \leq n/4$:

\begin{align*}
\Pr(G_1G_2) & \sim \left( \frac{8}{\pi n}\right) \cdot \left( \frac{1}{\sqrt{1 - \frac{4t^2}{n^2}}}\right)\left( 1 + O\left(\frac{1}{n}\right)\right)\\
& = \left( \frac{8}{\pi n}\right) \cdot \left(1 - \frac{4t^2}{n^2} \right)^{-\frac{1}{2}}\left( 1 + O\left(\frac{1}{n}\right)\right) \\
\end{align*}

Therefore, after making sure that the error term does not really change the asymptotics, we have that the probability that $\chi_1$ and $\chi_2$ are both good for $m$ independently picked hyperedges is 

\begin{equation}
\label{eqn:largealpha}
\left( \frac{8}{\pi n}\right)^m \cdot \left(1 - \frac{4t^2}{n^2} \right)^{-\frac{m}{2}} \leq \left( \frac{8}{\pi n}\right)^m \cdot e^{\frac{4t^2}{C_3 n \log n}}
\end{equation}

where the inequality is due to the fact that $1 - x \geq e^{-2x}$ holds when $x < 0.25$. The other case, $|t| > n/4$, is handled by the trivial bound:

\begin{align*}
\Pr(G_1G_2) & \leq \Pr(G_1) \\
& = \sqrt{\frac{8}{\pi n}}
\end{align*}

And thus, the probability that two such colorings are good for $M$ is at most:

\begin{equation}
\label{eqn:smallalpha}
\left(\frac{8}{\pi n}\right)^{\frac{m}{2}} = \left(\frac{8}{\pi n}\right)^{m} \left(\frac{8}{\pi n}\right)^{-\frac{m}{2}}  \leq \left(\frac{8}{\pi n} \right)^m e^{\frac{n}{16}}
\end{equation}

for large enough $C_3$. On the other hand, the number of ordered pairs of colorings at hamming distance $\alpha n$ is precisely

\begin{align*}
\binom{n}{n/2} \cdot \binom{n/2}{\alpha/2}^2 
\end{align*}

\begin{proof}[Proof of Lemma~\ref{lem:secmom}:]

Proceeding in a straightforward matter, using~(\ref{eqn:largealpha}) and~(\ref{eqn:smallalpha}), we have:

\begin{align*}
\E[X^2] & \leq \sum_{\alpha n / 2 = 0}^{n/8} \binom{n}{n/2} \cdot \binom{n/2}{\alpha n/2}^2 \left( \frac{4}{\sqrt{\alpha(1 - \alpha)}\pi n} \right)^m\left(1 +  O\left( \frac{1}{\log n}\right)\right) \\
& ~~~ + \sum_{\alpha n / 2 = n/8}^{n/2} \binom{n}{n/2} \cdot \binom{n/2}{\alpha n/2}^2  \cdot \left( \frac{8}{\pi n}\right)^m \cdot e^{\frac{n}{16}} \\
& \leq \binom{n}{n/2}\left( \frac{8}{\pi n}\right)^m \Bigg( \sum_{t/2 = -n/8}^{n/8} \binom{n/2}{n/4 - t/2}^2 \left(e^{\frac{4t^2}{C_3n\log n}}\right)  \\
& ~~~ + 2 \cdot e^{\frac{n}{16}}\sum_{t/2 = n/8 + 1}^{n/4} \binom{n/2}{n/4 - t/2}^2 \Bigg)\left(1 +  O\left( \frac{1}{\log n}\right)\right)
\end{align*}

For a rough plan, let us first ignore the $\left(1 +  O\left( \frac{1}{\log n}\right)\right)$ part in the above inequality. If we hope to prove that $\E[X^2] \sim E[X]^2$, we \emph{must} prove that 

\[
\Bigg( \sum_{t/2 = -n/8}^{n/8} \binom{n/2}{n/4 - t/2}^2 \left(e^{\frac{4t^2}{C_3n\log n}}\right) + 2 \cdot e^{\frac{n}{16}}\sum_{t/2 = n/8 + 1}^{n/4} \binom{n/2}{n/4 - t/2}^2 \Bigg) \sim \binom{n}{n/2}.
\]

Hence, we shall only care about this summation. First, we observe that the second term has very little contribution to the second moment and deal with it immediately. By the Chernoff bound for hypergeometric r.v's~(\ref{ineq:Hypergeom}):

\begin{align*}
2\sum_{t/2 = n/8 + 1}^{n/4}\binom{n/2}{n/4 - t/2}^2 = 2\cdot \binom{n}{n/2}\cdot \Pr\left(G \geq (5n/8) \right) \leq \binom{n}{n/2}\cdot 2e^{-\frac{n}{8}}.
\end{align*}

And thus the total contribution of the second term is 

\[
2 \cdot e^{n/16} \cdot \left(\frac{8}{\pi n} \right)^m \cdot \binom{n}{n/2}\cdot \sum_{t/2 = n/8 + 1}^{n/4} \binom{n/2}{n/4 - t/2}^2 = \E[X]^2\cdot o(1).
\]

For the first term, let us define the term 

\[T' = \sum_{t/2 = -n/8}^{n/8} \binom{n/2}{n/4 - t/2}^2 \left(e^{\frac{4t^2}{C_3n\log n}}\right)
\]
that we are left with the task of estimating. Actually, we are interested in $T \defeq T'/\binom{n}{n/2}$. If we prove that this term is bounded by $1 + o(1)$, then we are done.

Towards this, fix $t_0 = \sqrt{n}(\log n)^{\frac{1}{4}}$ and partition: 
\[
S := \{-n/8,-n/8 + 1, \ldots, n/8 - 1, n/8\} = A \sqcup B_1 \sqcup \cdots \sqcup B_{\ell}
\]

as follows:

\begin{itemize}
\item[1.] $A = \{x \in S \suchthat |x| \leq t_0\}$
\item[2.] $B_i = \{x \in S \suchthat t_0 + (i-1) \cdot \sqrt{n} \leq |x| < t_0 +  i \cdot \sqrt{n}\}$.
\end{itemize}

Also, abbreviate $t_0 + i\cdot \sqrt{n} =: t_i$ for $i = 0,\ldots, \ell$. Now we rewrite our quantity of interest:

\begin{align*}
T & = \frac{\sum_{t/2 \in A} \binom{n/2}{n/4 - t/2}^2 \left(e^{\frac{4t^2}{C_3n\log n}}\right)}{\binom{n}{n/2}} + \frac{\sum_{t/2 \in B} \binom{n/2}{n/4 - t/2}^2 \left(e^{\frac{4t^2}{C_3n\log n}}\right)}{\binom{n}{n/2}}\\
\end{align*}

Claim~\ref{claim:sumA} and Claim~\ref{claim:sumB} finish the proof.

\begin{claim}
\label{claim:sumA}
We have:

\[
\frac{\sum_{t/2 \in A} \binom{n/2}{n/4 - t/2}^2 \left(e^{\frac{4t^2}{C_3n\log n}}\right)}{\binom{n}{n/2}} = 1 - o(1).
\]
\end{claim}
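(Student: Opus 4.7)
The plan is to recognize the ratio $\binom{n/2}{n/4-t/2}^2/\binom{n}{n/2}$ as the hypergeometric point mass $\Pr(G = n/4 - t/2)$ for $G = G(n, n/2, n/2)$. By the Vandermonde identity $\sum_k \binom{n/2}{k}^2 = \binom{n}{n/2}$, this mass is normalized, so stripping the exponential weight gives
\[
\frac{\sum_{t/2 \in A} \binom{n/2}{n/4 - t/2}^2}{\binom{n}{n/2}} = \Pr\bigl(|G - n/4| \leq t_0\bigr).
\]
Since $G$ has mean $n/4$ and variance $\Theta(n)$, while $t_0^2 = n\sqrt{\log n}$, Chebyshev's inequality gives $\Pr(|G - n/4| > t_0) = O(1/\sqrt{\log n}) = o(1)$, so this unweighted version of the $A$-sum equals $1 - o(1)$. (One could instead invoke the hypergeometric Chernoff bound~(\ref{ineq:Hypergeom}) for a stronger tail, but it is not needed.)

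Next I would bound the exponential weight on $A$. For $t/2 \in A$ we have $t^2 \leq 4 t_0^2 = 4 n \sqrt{\log n}$, hence uniformly
\[
1 \leq \exp\!\Bigl(\tfrac{4t^2}{C_3 n \log n}\Bigr) \leq \exp\!\Bigl(\tfrac{16}{C_3 \sqrt{\log n}}\Bigr) = 1 + o(1).
\]
The weighted $A$-sum therefore lies between the unweighted version and $(1+o(1))$ times it, and so equals $1 - o(1)$ (with matching upper bound $1 + o(1)$), as required.

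There is really no hard step here; the only point requiring care is the calibration of $t_0 = \sqrt{n}(\log n)^{1/4}$. It must be much larger than the $\Theta(\sqrt{n})$ standard deviation of $G$ so that the tails fall outside $A$, but small enough that $t_0^2/(n\log n) = 1/\sqrt{\log n} = o(1)$, keeping the exponential weight at $1 + o(1)$ throughout $A$. This intermediate scale, safely between $\sqrt{n}$ and $\sqrt{n \log n}$, is precisely what allows both halves of the argument to go through simultaneously.
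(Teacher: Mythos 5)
Your proof is correct and follows the same approach as the paper: bound the exponential weight uniformly on $A$ by $1 + o(1)$ (using $t_0^2 = n\sqrt{\log n} \ll n\log n$), and show the residual hypergeometric mass in the window $|G - n/4| \leq t_0$ is $1 - o(1)$. The only cosmetic difference is that you use Chebyshev where the paper invokes the hypergeometric Chernoff bound~(\ref{ineq:Hypergeom}); both suffice, and your Chebyshev rate $O(1/\sqrt{\log n})$ in fact matches the error term the paper ultimately reports.
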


\begin{proof}

Using large deviation bounds for hypergeometric random variables (\ref{ineq:Hypergeom}), we have:

\begin{align*}
\sum_{|t/2| \leq t_0}\frac{\binom{n/2}{n/4 - t/2}^2 }{\binom{n}{n/2}}\left(e^{\frac{4t^2}{C_3n\log n}}\right) & \leq e^{\frac{4t_0^2}{C_3n\log n}} \cdot \sum_{|t/2| \leq t_0}\frac{\binom{n/2}{n/4 - t/2}^2 }{\binom{n}{n/2}} \\
& = e^{\frac{4t_0^2}{C_3n\log n}} \cdot \Pr\left(G \in n/4 \pm t_0\right) \\
&\leq  e^{\frac{t_0^2}{C_3n \log n}}\left(1 - e^{-\frac{2t_0^2}{n}}\right) \\
& = 1 - O\left(\frac{1}{\sqrt{\log n}}\right).
\end{align*}

\end{proof}

Next, we bound the terms corresponding to each of the $B_i$'s separately, in a manner similar as above. First, one does need to verify that the probability that hypergeometric random variable lies in the interval $t \pm \sqrt{n}$ away from its mean is of the order $ e^{O\left(\frac{t^2}{n}\right)}$.

For convenience, let us define some additional notation: 

\[
p_i \defeq \Pr\left(\left|G - (n/4)\right| \in B_i\right) = \frac{\sum_{t/2 \in B_i} \binom{n/2}{n/4 - t/2}^2}{\binom{n}{{n/2}}}
\]

is the probability that $t \in B_i$, and

\begin{align*}
q_i & \defeq \frac{\sum_{t/2 \in B_i} \binom{n/2}{n/4 - t/2}^2\left(e^{\frac{4t^2}{C_3n\log n}}\right)}{\binom{n}{{n/2}}} \\
\end{align*}

is the contribution to $T$ when the summation is over $B_i$. We are eventually interested in estimating the sum $\sum_{j = 1}^{\ell}q_i$. We will first prove the following:

\begin{claim}
\label{claim:pq}
There is a constant $C''$ such that 

\[
q_i = C'' \cdot p_i^{1 - O\left(\frac{1}{\log n} \right)}
\]
\end{claim}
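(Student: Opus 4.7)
The plan is to sandwich $q_i$ between $p_i$ and a slightly smaller power of $p_i$ by pulling the exponential weight out of the sum and then converting it, via the hypergeometric tail bound, into a factor of the form $p_i^{-O(1/\log n)}$.

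First, pulling the exponential factor out of the summand: each term of $q_i$ is the corresponding term of $p_i$ multiplied by $\exp(4t^2/(C_3 n \log n))$, which for $t/2 \in B_i$ lies in $[\exp(16 t_{i-1}^2/(C_3 n \log n)), \exp(16 t_i^2/(C_3 n \log n)))$. Using $t_i - t_{i-1} = \sqrt{n}$ together with $t_{i-1} \geq t_0 = \sqrt{n}(\log n)^{1/4}$,
$$t_i^2 = t_{i-1}^2\Bigl(1 + \tfrac{\sqrt{n}}{t_{i-1}}\Bigr)^{\!2} \leq t_{i-1}^2\bigl(1 + O((\log n)^{-1/4})\bigr),$$
so that
$$q_i \leq \exp\!\Bigl(\tfrac{16 t_{i-1}^2}{C_3 n \log n}(1+o(1))\Bigr)\, p_i.$$

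Next, I convert this extra exponential factor into a power of $p_i$ using a tail estimate for $p_i$ itself. Applying the hypergeometric Chernoff bound (\ref{ineq:Hypergeom}) to $G = G(n, n/2, n/2)$ gives $p_i \leq \Pr(|G - n/4| \geq t_{i-1}) \leq 4\exp(-4 t_{i-1}^2/n)$, and therefore $-\log p_i \geq 4 t_{i-1}^2/n - O(1)$. Substituting,
$$\log q_i \leq \log p_i + \tfrac{16 t_{i-1}^2}{C_3 n \log n}(1+o(1)) \leq \log p_i + \tfrac{4(1+o(1))}{C_3 \log n}\bigl(-\log p_i + O(1)\bigr),$$
which rearranges to $q_i \leq C'' \cdot p_i^{1 - O(1/\log n)}$ for an absolute constant $C''$.

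The most delicate step is controlling the variation of $\exp(4t^2/(C_3 n \log n))$ across $B_i$: the shells must be narrow enough (width $\sqrt{n}$) and the threshold $t_0 = \sqrt{n}(\log n)^{1/4}$ must be chosen so that $t_i/t_{i-1} - 1 = O((\log n)^{-1/4})$ is small, so that the resulting $(1+o(1))$ factor in the exponent does not dilute the final bound below $1 - O(1/\log n)$. Any wider choice of shell, or any smaller $t_0$, would force the extra exponential factor to vary too much within a single $B_i$ and break the argument.
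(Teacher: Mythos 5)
Your proof is correct, and it actually takes the argument in a different, and arguably sounder, direction than the paper does. The paper derives a \emph{lower} bound $p_i \geq C' 2^{-5t_i^2/n}$ via a Stirling estimate of the binomial mass and then asserts the relation $q_i \leq C'' p_i^{1 - O(1/\log n)}$. But to pass from $q_i \leq e^{\Theta(t_i^2/(n\log n))} p_i$ to $q_i \leq C'' p_i^{1-\epsilon}$ with $\epsilon = O(1/\log n)$, what one actually needs is a \emph{lower} bound on $|\log p_i|$ in terms of $t_i^2/n$ --- equivalently, an \emph{upper} bound $p_i \leq e^{-\Omega(t_i^2/n)}$ --- since $p_i^{1-\epsilon}$ is increasing in $\epsilon$, so the needed inequality $\epsilon_i \leq c/\log n$ is an upper bound on $\epsilon_i = \Theta(t_i^2/(n\log n \cdot |\log p_i|))$; the paper's lower bound on $p_i$ only upper-bounds $|\log p_i|$ and hence lower-bounds $\epsilon_i$, which goes the wrong way. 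Your use of the hypergeometric Chernoff bound $p_i \leq 4e^{-4t_{i-1}^2/n}$ is exactly the right ingredient, and the rest of the rearrangement is routine and correct. One small remark: your emphasis on the delicacy of making $t_i/t_{i-1} = 1 + O((\log n)^{-1/4})$ is a bit overcautious --- since the resulting factor only feeds into the hidden constant of the $O(1/\log n)$ in the exponent, any \emph{bounded} ratio $t_i/t_{i-1} = O(1)$ (e.g.\ dyadic shells with $t_i = 2t_{i-1}$) would already suffice for the claim as stated; the stronger asymptotic $t_i/t_{i-1} \to 1$ is not required.
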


\begin{proof}

Consider a generic $p_i$ and $q_i$. We have $\Pr(G = \alpha n/2) = \binom{n/2}{\alpha n/2}^2$. Suppose that $\alpha = \frac{1}{2} - \frac{t_i}{n}$. Here, we have the additional constraint that $|t_i| \geq t_0 \gg \sqrt{n}$. We would like to get a lower bound on the quantity:

\[
p_i = \Pr(X \in [\alpha n/2, \alpha n/2 + \sqrt{n}])
\]

Proceeding in the naive way, we have:

\begin{align*}
p_i & =  \Pr(G \in [\alpha n/2, \alpha n/2 + \sqrt{n}]) \\
&\geq \sqrt{n} \cdot \Pr(G = \alpha n/2) \\
&= \sqrt{n} \cdot \binom{n/2}{\alpha n/2}^2 \Bigg / \binom{n}{n/2} \\
& \sim \sqrt{n} \cdot \frac{4}{\pi n \alpha(1 - \alpha)} 2^{H(\alpha)n} \Bigg / \sqrt{\frac{2}{\pi n}} 2^n \\
& = \frac{2}{\alpha(1 - \alpha)} \sqrt{\frac{2}{\pi}} \cdot 2^{(H(\alpha) - 1)n}
\end{align*}

Plus, we have that $(H(\alpha) - 1) = H\left(\frac{1}{2} - \frac{t_i}{n}\right) - 1 \geq -5\left(\frac{t_i}{n}\right)^2$, and this $p_i > C' \cdot 2^{- 5\frac{t_i^2}{n}}$,

On the other hand, we have the following estimate for $q_i$:

\begin{align*}
q_i &= \frac{\sum_{t/2 \in B_i} \binom{n/2}{n/4 - t/2}^2\left(e^{\frac{4t^2}{C_3n\log n}}\right)}{\binom{n}{{n/2}}} \\
& \leq e^{\frac{4(t_0 + i\cdot \sqrt{n})^2}{Cn\log n}}\frac{\sum_{t/2 \in B_i} \binom{n/2}{n/4 - t/2}^2}{\binom{n}{{n/2}}} \\
& = e^{\frac{4t_i^2}{Cn\log n}}p_i.
\end{align*}

Thus we have the relation between $p_i$ and $q_i$ given by

\begin{equation}
\label{eqn:pq}
q_i \leq C'' \cdot p_i^{1 - O\left(\frac{1}{\log n}\right)}
\end{equation}

for some constant $C''$, as desired.
\end{proof}

Now we can bound the second term in our sum for $T$.

\begin{claim}
\label{claim:sumB}
We have

\[
\frac{\sum_{t/2 \in B} \binom{n/2}{n/4 - t/2}^2 \left(e^{\frac{4t^2}{Cn\log n}}\right)}{\binom{n}{n/2}} = o(1).
\]
\end{claim}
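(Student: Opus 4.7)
The plan is to show $\sum_{i=1}^{\ell} q_i = o(1)$ by applying the hypergeometric tail bound~(\ref{ineq:Hypergeom}) bucket-by-bucket and summing the resulting super-exponentially small terms. The threshold $t_0 = \sqrt{n}(\log n)^{1/4}$ was chosen precisely so that $t_0^2/n = \sqrt{\log n}$ dominates the $\log n$ corrections introduced by the exponential factor $e^{4t^2/(Cn\log n)}$, and this is the essential reason the contribution from $B$ is negligible.

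First, for each bucket $B_i$, every $t/2 \in B_i$ satisfies $|t| \geq t_{i-1}$, so applying~(\ref{ineq:Hypergeom}) to $G = G(n,n/2,n/2)$ yields $p_i \leq 2 e^{-2t_{i-1}^2/n}$. Combining this with the elementary bound $q_i \leq e^{4t_i^2/(Cn\log n)} p_i$ already established inside the proof of Claim~\ref{claim:pq}, and using $t_i^2 \leq 2t_{i-1}^2 + 2n$ (which follows from $t_i = t_{i-1} + \sqrt{n}$ via AM-GM), I would arrive at a bound of the shape
\[
q_i \;\leq\; 2 \exp\!\left( -\tfrac{2 t_{i-1}^2}{n}\bigl(1 - O(\tfrac{1}{\log n})\bigr) + O(\tfrac{1}{\log n}) \right),
\]
provided $C$ is taken sufficiently large, as is already assumed for $C_3$ in the statement of Theorem~\ref{thm:H2}.

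Next, writing $t_{i-1} = \bigl((\log n)^{1/4} + (i-1)\bigr)\sqrt{n}$ and factoring out the $i = 1$ term, the sum $\sum_{i=1}^{\ell} q_i$ is upper bounded by $e^{-2(1-o(1))\sqrt{\log n}}$ times a tail $\sum_{j \geq 0} e^{-4(1-o(1))\, j\, (\log n)^{1/4}}$, a geometric-type series that converges to $1 + o(1)$. The resulting overall bound $O\!\bigl(e^{-2(1-o(1))\sqrt{\log n}}\bigr)$ is $o(1)$, and doubling to account for negative $t$ does not change the asymptotics.

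The only real obstacle is the bookkeeping in the first step: one has to check that for sufficiently large $C$, the blow-up factor $e^{4t^2/(Cn\log n)}$ erodes the Gaussian-type decay $e^{-2t^2/n}$ by only a $(1 - O(1/\log n))$ factor uniformly in $i$, so that the surviving exponent is strictly negative and of order $-t_{i-1}^2/n$. Once this bookkeeping is done, the convergence of the remaining geometric series and the final $o(1)$ conclusion are immediate.
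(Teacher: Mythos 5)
Your proof is correct, but it aggregates the buckets differently from the paper. The paper first proves Claim~\ref{claim:pq}, which packages the inequality $q_i \le e^{4t_i^2/(Cn\log n)}p_i$ together with a lower bound on $p_i$ (derived from Stirling and the entropy estimate $H(\alpha)-1 \ge -5(t_i/n)^2$) to obtain $q_i \le C'' p_i^{\,1-O(1/\log n)}$; it then applies Jensen's inequality to the concave map $x \mapsto x^{1-O(1/\log n)}$ together with the single aggregate tail bound $\sum_i p_i \le e^{-2t_0^2/n}$ and the observation $\ell^{O(1/\log n)} = O(1)$. You instead bound each $p_i$ bucket-by-bucket via the hypergeometric tail inequality, $p_i \le 2e^{-\Theta(t_{i-1}^2/n)}$, combine this with the same elementary factor $e^{4t_i^2/(Cn\log n)}$ and the AM--GM relation $t_i^2 \le 2t_{i-1}^2 + 2n$, and then sum an explicit geometric series in $j = i-1$ with ratio $e^{-\Theta((\log n)^{1/4})}$. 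Your route is arguably the more elementary one: it bypasses Jensen's inequality entirely and, more importantly, does not need the lower bound on $p_i$ (the hardest part of Claim~\ref{claim:pq}), relying only on the upper tail estimate. What the paper's Jensen argument buys is that, given Claim~\ref{claim:pq} (which it uses anyway), the $B$-tail is dispatched in two lines without bucket-by-bucket bookkeeping. Both paths land on the same quantitative bound $e^{-\Omega(\sqrt{\log n})}$, and your identification of $t_0 = \sqrt{n}(\log n)^{1/4}$ as the threshold at which Gaussian decay $e^{-\Theta(t^2/n)}$ strictly dominates the blow-up $e^{O(t^2/(n\log n))}$ is exactly the mechanism the paper exploits.
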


\begin{proof}

First, we observe that $\ell = O(\sqrt{n})$, and so 

\begin{equation}
\label{eqn:ell}
\ell^{O\left(\frac{1}{\log n}\right)} = O(1). 
\end{equation}

With this in mind, we compute:

\begin{align*}
\frac{\sum_{t/2 \in B} \binom{n/2}{n/4 - t/2}^2 \left(e^{\frac{4t^2}{Cn\log n}}\right)}{\binom{n}{n/2}} & = \sum_{i = 1}^{\ell} q_i \\
& \leq C''\sum_{i = 1}^{\ell}p_i^{1 - O\left(\frac{1}{\log n} \right)} \\
& \leq C'' \cdot \ell \cdot \left( \frac{\sum_{i = 1}^{\ell}p_i}{\ell}\right)^{1 - O\left(\frac{1}{ \log n} \right)} \\
& \leq C'' \cdot \ell^{O\left(\frac{1}{\log n} \right)} \cdot e^{-\frac{2t_0^2}{n}\left(1 - O\left(\frac{1}{\log n} \right)\right)} \\
& = e^{-\Omega(\sqrt{\log n})}
\end{align*}

In the above chain of inequalities, we used Claim~\ref{claim:pq}, Jenson's inequality, the fact that $\sum p_i \leq e^{-\frac{t_0^2}{n}}$, and~(\ref{eqn:ell}) respectively.

\end{proof}

\end{proof}

\begin{proof}[Proof of Theorem~\ref{thm:H2}]
First, we condition on the event that every edge in $\mathcal{H}_2$ has even size. We want to lower bound the probability that $X$, the number of good balanced colorings, is nonzero. Lemma~\ref{lem:secmom} gives us that $\E[X^2] = (1 + o(1))\E[X]^2$, and Corollary~\ref{corr:main} gives us that 
\[
\Pr(X = 0) = o(1).
\]
Using Proposition~\ref{proposition:even}, we have that the discrepancy of $\mathcal{H}_2$ is, with high probability, at most $1$.
\end{proof}

\section{`Norm' of $M$: Proof of Theorem~\ref{thm:norm}}
\label{sec:norm}

In this section, we shall prove Theorem~\ref{thm:norm}. As mentioned before, this just means verifying the proof of Kahn and Szemer\'{e}di for our random model (also see~\cite{BFSU98}) We assume that the regularity is $t \ll n^{1/2}$. 

We shall prove that for every $x$, and $y$ such that $\|x\| = \|y\| = 1$ and $x \perp \overline{1}$, we have that $|y^tMx| \leq O(\sqrt{t})$. First, we `discretize' our problem by restricting $x$ to belong to the $\epsilon$-net:

\[
T \defeq \left\{x \in \left( \frac{\epsilon}{\sqrt{n}}\mathbb{Z}\right)^n \suchthat~\|x\| \leq 1 \text{ and } x \perp \overline{1}\right\}
\]

and $y$ belonging to 

\[T' \defeq \left\{y \in \left( \frac{\epsilon}{\sqrt{n}}\mathbb{Z}\right)^n \suchthat \|y\| \leq 1 \right\}
\]

for a small enough constant $\epsilon$.

\begin{claim}[\cite{FKS}, Proposition 2.1)]
If for every $x \in T$, and $y \in T'$, we have that $\|y^tMx\| \leq B$ for some constant $C$, then we have that for every $z \in \R^n$ such that $\|z\| = 1$, we have that $\|Mz\| \leq (1 - 3\epsilon)^{-1}B$
\end{claim}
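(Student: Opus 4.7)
The plan is a standard $\epsilon$-net / duality argument. Let $N := \sup\{\|Mz\| : \|z\| = 1,\ z \perp \overline{1}\}$; I want to show $N \leq (1-3\epsilon)^{-1} B$. Fix a unit $z \in \overline{1}^{\perp}$ attaining (or nearly attaining) this supremum, and set $y^{*} := Mz/\|Mz\|$, so that $\|Mz\| = (y^{*})^{T} M z$ with $\|y^{*}\| = 1$.

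The next step is to produce lattice approximations $x \in T$ to $z$ and $y \in T'$ to $y^{*}$. For $y^{*}$, coordinatewise rounding to the nearest multiple of $\epsilon/\sqrt{n}$ (and clipping the norm if necessary) yields $y \in T'$ with $\|y^{*}-y\| \leq \epsilon$. For $z$, coordinatewise rounding gives a lattice vector $x'$ with $\|z - x'\| \leq \epsilon/2$; I then project $x'$ onto $\overline{1}^{\perp}$, which moves it by at most $|x' \cdot \overline{1}|/\sqrt{n} \leq \|x'-z\| \leq \epsilon/2$ (using $z \cdot \overline{1} = 0$), and rescale slightly if the norm exceeds $1$. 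Modulo the subtlety that the projected vector may leave the lattice, a careful rounding scheme then produces $x \in T$ with $\|z-x\|$ bounded by a small multiple of $\epsilon$.

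With these in hand, I expand by bilinearity:
\[
\|Mz\| \;=\; (y^{*})^{T} M z \;=\; y^{T} M x \;+\; y^{T} M(z-x) \;+\; (y^{*} - y)^{T} M z.
\]
The first term is at most $B$ by hypothesis. Since $x \perp \overline{1}$ and $z \perp \overline{1}$, the difference $z - x$ also lies in $\overline{1}^{\perp}$, so $\|M(z-x)\| \leq N\|z-x\|$, which bounds the second term by roughly $\epsilon N$. The third term is at most $\|y^{*}-y\|\cdot \|Mz\| \leq \epsilon N$. Collecting everything gives $N \leq B + 2\epsilon N + (\text{slack})$, where the slack absorbs the additional perturbation in rounding $z$; after rearrangement this yields $(1 - 3\epsilon) N \leq B$.

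The main obstacle is the rounding of $z$: the vector $x$ must simultaneously land on the scaled integer lattice, lie in $\overline{1}^{\perp}$, and have norm at most $1$, while keeping $\|z-x\|$ comparable to $\epsilon$. Coordinatewise rounding enforces only the lattice condition, and restoring the orthogonality and norm constraints without leaving the lattice (or only leaving it by an amount that can itself be re-rounded) is the delicate step; it is precisely this extra perturbation that turns the naive factor $(1-2\epsilon)^{-1}$ into the claimed $(1-3\epsilon)^{-1}$.
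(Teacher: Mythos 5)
Your proof is correct and uses the same $\epsilon$-net argument as the paper: take the maximizing $z$, form $y^{*} = Mz/\|Mz\|$, replace both by net points from $T$ and $T'$, and expand bilinearly. Your three-term grouping is in fact marginally tighter than the paper's four-term expansion (the paper's extra cross term $\langle w_1, Mw_2\rangle$ is what supplies the third $\epsilon$), and both treatments equally brush past the detail of producing a point of $T$ within distance $O(\epsilon)$ of $z$ while staying on the lattice, in $\overline{\mathbf{1}}^{\perp}$, and inside the unit ball.
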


\begin{proof}
Let $z = \operatorname{argmax}_{\|z\| = 1}\|Mz\|$. We shall use the fact that there are $x \in T$, and $y \in T'$ such that $\|x - z\| \leq \epsilon$, and $\left\|y - \frac{Mz}{\|Mz\|} \right\| \leq \epsilon$. With this in mind, we have:

\begin{align*}
\|Mz\| & = \left\langle \frac{Mz}{\|Mz\|}, Mz \right\rangle \\
& = \langle y + w_1, M(x+w_2) \rangle \\
& = y^tMx + \langle w_1, Mx \rangle + \langle y,Mw_2 \rangle + \langle w_1 ,Mw_2\rangle
\end{align*}

Where $|w_1|,|w_2| \leq \epsilon$. We note that each of the terms $\langle w_1, Mx \rangle$ and $\langle y,Mw_2 \rangle$, and $\langle w_1 ,Mw_2\rangle$ are upper bounded by $\epsilon \|Mz\|$, and $\langle w_1 ,Mw_2 \rangle \leq \epsilon^2 \|Mz\|$. Combining this, and using the fact that $\epsilon^2 \leq \epsilon$, we have
\[
\|Mz\| \leq (1 - 3\epsilon)^{-1}y^tMx \leq (1 - 3\epsilon)^{-1}B
\]
\end{proof}

So now, will need to only union bound over $T \cup T'$. It is not hard to see that each of these has size at most $|T|,|T'| \leq \left(\frac{C_v}{\epsilon}\right)^n$ for some absolute constant $C_v$.

Indeed, we have:

\begin{align*}
|T| & \leq \left(\frac{\sqrt{n}}{\epsilon}\right)^n\operatorname{Vol}\left\{x \in \R^n \suchthat \|x\| \leq 1+ \epsilon\right\} \\
& \leq \left(\frac{\sqrt{n}}{\epsilon}\right)^n \cdot \frac{1}{\sqrt{\pi n}} \left( \frac{2 \pi e}{n}\right)^{n/2}(1+\epsilon)^n \\
& \leq \left(\frac{C_v}{\epsilon}\right)^n
\end{align*}

For some constant $C_v$.

We split the pairs $[n] \times [n] = L \cup \overline{L}$ where $L \defeq \{(u,v) \suchthat |x_uy_v| \geq \sqrt{t}/n\}$, which we will call `large entries' and write our quantity of interest:

\begin{align*}
\sum_{(u,v) \in [n] \times [n]} x_uM_{u,v}y_v = \sum_{(u,v) \in L}x_uM_{u,v}y_v + \sum_{(u,v) \in \overline{L}}x_uM_{u,v}y_v
\end{align*}

\textbf{For the large entries}: For a set of vertices $A \subset [n]$ and a set of edges $B \subset [n]$, let us denote $I(A,B)$ to be the number of vertex-edge incidences in $A$ and $B$. Let us use $\mu(A,B) \defeq \E[|I(A,B)|]$.

\begin{lemma}
\label{lem:dense}
We have, for every set $A$ of points and every set $B$, we have that with high probability, $I \defeq |I(A,B)|$ and $\mu \defeq \mu(A,B)$ satisfy at least one of the following:
\begin{enumerate}
\item $I \leq 2 \mu$
\item $I \log \left(I/\mu \right) \leq C |B| \log \left( n / |B| \right)$
\end{enumerate}
\end{lemma}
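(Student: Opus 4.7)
The plan is to follow the standard Kahn--Szemer\'{e}di dichotomy: a per-pair Chernoff tail bound on $I(A,B)$, followed by a union bound over all pairs of given sizes.

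For a fixed pair $(A,B)$ with $|A|=a$ and $|B|=b$, observe that in $\mathcal{H}_1$ each vertex $v$ independently samples a uniform random $t$-subset $S_v$ of the edges, so $I(A,B) = \sum_{v\in A}|S_v \cap B|$ is a sum of $a$ independent hypergeometric random variables each with mean $bt/n$. Using that hypergeometric variables are dominated by binomials in the MGF sense, a routine Chernoff computation gives
\[
\Pr[I(A,B) \geq K] \;\leq\; \left(\frac{\mu}{K}\right)^K e^{K-\mu} \;=\; \exp\!\bigl(-\mu\,(r\log r - r + 1)\bigr)
\]
for $K \geq \mu$, where $\mu = abt/n$ and $r = K/\mu$.

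Next I would union-bound over the $\binom{n}{a}\binom{n}{b} \leq \exp(a\log(en/a) + b\log(en/b))$ pairs of sizes $(a,b)$, and set $K=I$. Call $(A,B)$ \emph{bad} if both lemma conditions fail, i.e.\ $I \geq 2\mu$ and $I\log(I/\mu) > Cb\log(n/b)$ for a large constant $C$. The total failure probability at size $(a,b)$ is then at most
\[
\exp\!\bigl(a\log(en/a) + b\log(en/b) - \mu(r\log r - r + 1)\bigr),
\]
and the goal is to show this sums to $o(1)$ over the polynomially many values of $a,b,I$.

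The main obstacle---and the main technical step---is absorbing the union-bound factor $a\log(en/a) + b\log(en/b)$ into $\mu(r\log r - r + 1)$. Choosing $C$ large absorbs the $b\log(en/b)$ term via the bad-pair condition $Cb\log(n/b) < I\log(I/\mu) \asymp \mu(r\log r - r + 1)$ for $r \geq 2$. The $a\log(en/a)$ term is more delicate, since the lemma's right-hand side involves only $b$; I would split on $r$. For $r \geq e^{2}$, the inequality $r\log r - r + 1 \geq \tfrac{1}{2}r\log r$ gives $\mu(r\log r - r + 1) \geq \tfrac{1}{2} I\log(I/\mu)$, and combined with $I \geq 2\mu = 2abt/n$ (together with the trivial $I\leq at$) this absorbs $a\log(en/a)$. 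For $2 \leq r < e^{2}$, the sharper form $\mu(r\log r - r + 1) \asymp \mu$ combined with the failure of condition 2 forces $\mu$, and hence $a$, to be large enough that both union-bound terms are dominated. The resulting case analysis is standard Kahn--Szemer\'{e}di bookkeeping, but is where the real work of the proof lies.
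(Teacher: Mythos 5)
Your Chernoff computation for $I(A,B)$ is a valid reformulation of what the paper does (the paper instead bounds $\Pr[I(A,B)=J]\le(2t/n)^{|J|}$ for a fixed incidence pattern $J$ and multiplies by $\binom{ab}{i}$, which is the same estimate in combinatorial form, and then splits on $i\lessgtr\log^2 n$ rather than on $r$). The genuine gap is in how you propose to absorb the $a\log(en/a)$ term. The paper's proof hinges on the remark ``this argument is symmetric in $a$ and $b$'' and takes WLOG $a\le b$; the right-hand side of condition (2) is then implicitly $C\max(|A|,|B|)\log(n/\max(|A|,|B|))$, so that $\binom{n}{a}\binom{n}{b}\le\binom{n}{b}^2$ and both union-bound terms are comparable to the quantity the bad-pair hypothesis lower-bounds. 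You instead read the lemma literally with only $|B|$ on the right and try to handle $a\log(en/a)$ by a case split on $r$. This does not work when $a\gg b$. For instance take $t=\log^{10}n$, $b=1$, $r$ near $2$: the bad-pair condition $I\log(I/\mu)>C\cdot 1\cdot\log n$ only forces $\mu=\Theta(\log n)$, hence $a=\mu n/(bt)=\Theta(n/\log^9 n)$, so the union-bound cost $a\log(en/a)=\Theta(n\log\log n/\log^9 n)$ dwarfs the Chernoff exponent $\Theta(\mu)=\Theta(\log n)$. In fact, with those parameters the literal statement is simply false: pick any edge $e$ with $|e|\ge t/2$ (almost all are) and any $A\supseteq e$ of size $a=\Theta(n/\log^9 n)\gg t$; then $I(A,\{e\})=|e|\ge t/2\gg 2\mu$ and $I\log(I/\mu)\gg C\log n$, so both conditions fail. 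So the case $a>b$ cannot be rescued by your $r$-split; it must be handled by the $a\leftrightarrow b$ symmetry the paper invokes (equivalently, by stating the bad-pair condition with $\max(|A|,|B|)$), which is exactly the simplification your proposal misses.
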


This lemma is sufficient to show that the large pairs do not contribute too much, as shown by the following lemma, which is the main part of the proof of Kahn and Szemer\'{e}di.

\begin{lemma}[\cite{FKS}, Lemma 2.6] 
If the conditions given in Lemma~\ref{lem:dense} are satisfied, then $\sum_{(u,v) \in L} \left|x_u M_{u,v}y_v \right| = O(\sqrt{t})$ for all $x,y \in T$.
\end{lemma}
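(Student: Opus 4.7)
The plan is to follow the Kahn--Szemer\'edi framework~\cite{FKS}: perform a dyadic decomposition of the coordinates of $x$ and $y$ by magnitude, apply the dichotomy of Lemma~\ref{lem:dense} to each dyadic block, and sum the resulting contributions. For $i \geq 0$ let
\[
U_i = \left\{u \in [n] \suchthat \tfrac{2^{i-1}}{\sqrt n} < |x_u| \leq \tfrac{2^i}{\sqrt n}\right\},
\]
and define $V_j$ analogously from the coordinates of $y$. The unit-norm constraint $\|x\| \leq 1$ yields both a \emph{size bound} $|U_i| \leq 4n/2^{2i}$ and an \emph{energy bound} $\sum_i 2^{2i}|U_i| \leq 4n$, so by Cauchy--Schwarz also $\sum_i 2^i|U_i| \leq 2n$ (and analogously for $V_j$). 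The defining property $|x_u y_v| \geq \sqrt t/n$ of the large-entry set $L$ then forces any contributing dyadic block $(U_i, V_j)$ to satisfy $i+j \geq \sigma_0 := \tfrac12\log_2 t - O(1)$, and we bound
\[
\sum_{(u,v) \in L} |x_u M_{u,v} y_v| \;\leq\; \sum_{i+j \geq \sigma_0} \frac{2^{i+j}}{n}\, I(U_i, V_j).
\]

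For each pair $(i,j)$ I would invoke Lemma~\ref{lem:dense}, which yields either the \emph{sparse} alternative $I_{ij} \leq 2\mu_{ij} = 2t|U_i||V_j|/n$ or the \emph{dense} alternative $I_{ij}\log(I_{ij}/\mu_{ij}) \leq C\min(|U_i|,|V_j|)\log(n/\min(|U_i|,|V_j|))$. For sparse blocks, the contribution is $O(t/n^2)\cdot 2^{i+j}|U_i||V_j|$; using the size bound on whichever of $i, j$ exceeds $\sigma_0/2$ (which forces $|U_i| \leq O(n/\sqrt t)$ on that side) together with Cauchy--Schwarz on the other factor, $\sum_j 2^j|V_j| \leq 2n$, one arranges for the total sparse contribution to come out to $O(\sqrt t)$. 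For dense blocks, I would substitute the explicit bound on $I_{ij}$ from Lemma~\ref{lem:dense}, dyadically subdivide the possible values of $\sigma_{ij} := I_{ij}/\mu_{ij}$, and perform an analogous summation against the constraint $i+j \geq \sigma_0$.

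I expect the main obstacle to be the dense case, where one must verify that the log factor $\log(n/\min(|U_i|,|V_j|))$ in the numerator is tamed by $\log \sigma_{ij}$ in the denominator uniformly across all dyadic scales. The standard remedy, as in~\cite{FKS,BFSU98}, is to introduce a scale-dependent threshold for $\sigma_{ij}$ and to default to Case~1 of Lemma~\ref{lem:dense} whenever $\sigma_{ij}$ is too small for Case~2 to give a useful bound, and vice versa; the thresholds are tuned so that the two resulting sub-sums fit together into a single geometric series that telescopes to $O(\sqrt t)$. Getting these cutoffs correctly tuned, and carefully bookkeeping the resulting $O(\log^2 n)$-many contributions, is the technical heart of the argument; the rest of the proof is routine algebra against the size and energy bounds recorded above.
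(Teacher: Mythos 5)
Note first that the paper itself offers no proof of this lemma; it is quoted verbatim from Friedman--Kahn--Szemer\'edi~\cite{FKS} and used as a black box, so you are reconstructing an external argument rather than one appearing in the paper. The framework you sketch---dyadic decomposition of $x$ and $y$ into levels $U_i,V_j$, the size bound $|U_i|\leq 4n/2^{2i}$, the energy bound $\sum_i 2^{2i}|U_i|\leq 4n$, the constraint $i+j\geq\sigma_0$ forced by membership in $L$, and the case split on which alternative of Lemma~\ref{lem:dense} holds---is the correct skeleton and matches~\cite{FKS,BFSU98}.

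The sparse-case computation as you describe it, however, does not close. Bounding $\sum_{(i,j)}(2t/n^2)\,2^{i+j}|U_i||V_j|$ by applying the size bound to whichever of $i,j$ exceeds $\sigma_0/2$ and Cauchy--Schwarz in the form $\sum_j 2^j|V_j|\leq 2n$ on the other side leaves a dangling factor $2^i$: if you literally use $|U_i|\lesssim n/\sqrt t$ the sum over $i$ is geometric increasing, and if you use the sharper $|U_i|\leq 4n/2^{2i}$ you get $\sum_{i\geq\sigma_0/2}16t/2^i=O(t^{3/4})$, not $O(\sqrt t)$. The standard fix uses the constraint multiplicatively rather than just to restrict the range: on contributing blocks $2^{i+j}\gtrsim\sqrt t$, hence $2^{i+j}=2^{2i+2j}/2^{i+j}\lesssim 2^{2i}2^{2j}/\sqrt t$, so the sparse contribution is at most a constant times $(\sqrt t/n^2)\bigl(\sum_i 2^{2i}|U_i|\bigr)\bigl(\sum_j 2^{2j}|V_j|\bigr)=O(\sqrt t)$, using the energy bound on \emph{both} indices. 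This is a one-line repair, but as written your sparse case does not give the claimed bound.

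For the dense case you name the right quantity $\sigma_{ij}=I_{ij}/\mu_{ij}$ and the right tension (taming $\log(n/\min(|U_i|,|V_j|))$ against $\log\sigma_{ij}$), but the actual threshold choices and sub-case analysis are deferred entirely to~\cite{FKS,BFSU98}. That is honest, but it means the technical heart of the lemma---precisely the part you flag as hard---is not supplied; the multi-way case split there is what actually produces the $O(\sqrt t)$ and is not routine once the setup is in place.
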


Notice that since we are bounding $\sum_{(u,v) \in L} \left|x_u M_{u,v}y_v \right| = O(\sqrt{t})$, which is much stronger than what we really need, it is okay to consider both $x$ and $y$ from $T$.

\begin{proof}[Proof of Lemma~\ref{lem:dense}]
Let $\mathcal{B}_i(a,b)$ denote the event that there is an $A$ and a $B$ which do not satisfy either of the conditions and $|I(A,B)| = i$. Before, we prove the lemma, let us make some observations, which (in hindsight) help us compute the probabilities much easier. Let $A$ be a set of $a$ vertices and $B$ be a collection of $b$ edges, such that $a \leq b$ and $b \leq n/2$ (this argument is symmetric in $a$ and $b$). 

The point here is that we basically want to evaluate the sum:

\begin{align*}
\Pr\left(\bigcup_{a,b,i}\mathcal{B}_i(a,b) \right) & \leq \sum_i \Pr\left(\bigcup_{a,b}\mathcal{B}_i(a,b)\right) \\
& = \sum_{i \leq \log ^2n}\Pr\left(\bigcup_{a,b}\mathcal{B}_i(a,b)\right) + \sum_{i \geq \log^2 n}\Pr\left(\bigcup_{a,b}\mathcal{B}_i(a,b)\right)
\end{align*}

The first observation is that every term in the second sum is small. Towards this, we have the straightforward claim.

\begin{claim}
\label{claim:fixed}
For a set of vertices $A$ and edges $B$ and a set of possible incidences $J \subset A \times B$, we have that $\Pr(I(A,B) = J) \leq ^{}\left( \frac{2t}{n}\right)^{|J|}$
\end{claim}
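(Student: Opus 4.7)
The plan is to reduce the probability bound to independent per-vertex choices and then apply a standard negatively-correlated falling-factorial estimate. Decompose $J$ by its first coordinate: for each $v \in A$, set $J_v \defeq \{e \in B : (v,e) \in J\} \subseteq B$, so the $J_v$ are disjoint and $\sum_{v \in A}|J_v| = |J|$. The event $I(A,B) = J$ is contained in the event $J \subseteq I(A,B)$, i.e.\ that every $v \in A$ is incident to every edge in $J_v$. Since it suffices to prove an upper bound, I would prove $\Pr(J \subseteq I(A,B)) \leq (2t/n)^{|J|}$, which immediately implies the claim.

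The next step uses the defining feature of the model $\mathcal{H}_1$: each vertex $v$ independently chooses a uniformly random $t$-subset $N(v)$ of the $n$ possible edges. Hence the events $\{J_v \subseteq N(v)\}$ for different $v$ are jointly independent, and
\[
\Pr\bigl(J \subseteq I(A,B)\bigr) \;=\; \prod_{v \in A} \Pr\bigl(J_v \subseteq N(v)\bigr).
\]
For a fixed $v$ with $k \defeq |J_v|$, a direct count yields $\Pr(J_v \subseteq N(v)) = \binom{n-k}{t-k} / \binom{n}{t} = \prod_{i=0}^{k-1} \frac{t-i}{n-i}$. Since $t \leq n$, each factor satisfies $(t-i)/(n-i) \leq t/n$, so $\Pr(J_v \subseteq N(v)) \leq (t/n)^{k}$.

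Multiplying these per-vertex bounds and using $\sum_v |J_v| = |J|$ gives $\Pr(J \subseteq I(A,B)) \leq (t/n)^{|J|} \leq (2t/n)^{|J|}$, which is the asserted bound (in fact with a factor of $2$ to spare, slack that the author presumably carries for convenience later in the union bound). There is no real obstacle here; the only point requiring a line of justification is the elementary monotonicity $(t-i)/(n-i) \leq t/n$, equivalent to $t \leq n$. Everything else is bookkeeping, and the proof is a single short paragraph once independence across vertices has been invoked.
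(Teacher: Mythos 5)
Your proof is correct, and it takes a genuinely cleaner route than the paper's. Both proofs start from the same independence-across-vertices structure of $\mathcal{H}_1$, but where you diverge is the key simplification: you replace the event $\{I(A,B)=J\}$ with the larger event $\{J \subseteq I(A,B)\}$, whose per-vertex probability $\binom{n-k}{t-k}/\binom{n}{t} = \prod_{i=0}^{k-1}\frac{t-i}{n-i}$ telescopes into a clean falling-factorial ratio bounded term-by-term by $(t/n)^k$. The paper instead computes the \emph{exact} per-vertex probability $\binom{n-b}{t-t_i}/\binom{n}{t}$ (which keeps the parameter $b=|B|$ in play) and then passes through asymptotic estimates of the factorials, introducing a $\sim$ and an extra factor $e^{-\mu(A,B)}$ that needs to be argued away; this is where the spare factor of $2$ in $(2t/n)^{|J|}$ is used in the paper. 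Your monotone-coupling bound yields $(t/n)^{|J|}$ exactly, with no asymptotics and no hidden appeal to $t \ll \sqrt n$, so it is both shorter and strictly sharper; the factor $2$ in the claim is then pure slack. The only small bookkeeping point worth stating explicitly is the degenerate case $|J_v| > t$, where the probability is $0$ and the bound holds trivially.
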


\begin{proof}
W.L.O.G, let $A = \{1,\ldots, a\}$, and for $i \in A$, let $t_i = I(\{i\},B)$. We have that:

\begin{align*}
\Pr(I(A,B) = J) & = \prod_{i \in A}\frac{\binom{n-b}{t- t_i}}{\binom{n}{t}} \\
& \sim \frac{(n - b)^{at - |J|}}{n^{at}} \cdot \frac{(t!)^a}{\prod_{i \in A}(t - t_i)!} \\
& \leq e^{-\mu(A,B)}\left( \frac{n}{2}\right)^{-|J|} \cdot (t)^{|J|} \\
& \leq \left(\frac{2t}{n}\right)^{|J|}
\end{align*}
\end{proof}

If $\mathcal{B}_i(a,b)$ holds for some $A$ and $B$, we have that $I \log (I/\mu) > Cb \log(n/b)$, or 

\[
\left( \frac{n}{b}\right)^{Cb} \cdot \left( \frac{\mu}{I} \right)^I \leq 1
\]

Therefore, we have:

\begin{align*}
\Pr\left(\bigcup_{a,b}\mathcal{B}_i(a,b)\right) & \leq \binom{n}{a}\binom{n}{b}\binom{ab}{i}\left(\frac{2d}{n}\right)^i \\
& \leq \binom{n}{b}^2\left(e\frac{abd}{ni}\right)^i \\
& \leq \left(\frac{en}{b}\right)^{2b} \left( e\frac{\mu}{i}\right)^i \\
& \ll \left(\frac{b}{n}\right)^{C'b} \\
& \ll n^{-C'}
\end{align*}

for some large enough constant $C'$. Therefore, the second sum is at most $n^2 \frac{1}{n^{C'}} = o(1)$

It remains to deal with the sum $\sum_{i \leq \log ^2n}\Pr\left(\bigcup_{a,b}\mathcal{B}_i(a,b)\right)$. For these summands, we have that if $|I(A,B)| \leq \log^2n$ and $I \log (I/\mu) > Cb \log(n/b)$, then $I \geq Cb$. Therefore, we only need to evaluate the sum:

\begin{align*}
\sum_{i = Cb}^{\log^2 n}\Pr\left(\bigcup_{a,b}\mathcal{B}_i(a,b)\right) & \leq \binom{n}{a}\binom{n}{b}\sum_{i = Cb}^{\log^2 n}\binom{ab}{i}\left( \frac{2d}{n}\right)^i \\
& \leq \binom{n}{b}^2\sum_{i = Cb}^{\log^2 n} \left( \frac{2eabd}{in}\right)^i \\
& \leq \log^2n \left( \frac{n}{b} \right)^{2b}\left(\frac{2be}{Cn}\right)^{Cb} \\
& = o(1)
\end{align*}

We have used the assumption that $b \leq n/2$ in Claim~\ref{claim:fixed}. It can be easily checked that when, $b > n/2$, then $|I(A,B)| \leq d|A| \leq 2\mu(A,B)$.

\end{proof}

\textbf{For the small entries:}

Bounding the contribution from the small entries is much easier. The analysis given here is slightly different to the one given in~\cite{FKS} and~\cite{BFSU98}. However, it does not make much of a difference, and is still, essentially, a large deviation inequality. We will first compute the expected value of the quantity of interest using the following claim:

\begin{claim}
We have that:
\[
\left| \sum_{u,v \in \overline{L}} x_u y_v\right| \leq \frac{n}{\sqrt{t}}
\]
\end{claim}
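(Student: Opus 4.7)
The plan is to exploit the constraint $x \perp \overline{\mathbf{1}}$, which means $\sum_u x_u = 0$. This immediately gives
\[
\sum_{(u,v) \in [n]\times[n]} x_u y_v \;=\; \Big(\sum_u x_u\Big)\Big(\sum_v y_v\Big) \;=\; 0,
\]
so that $\sum_{(u,v)\in \overline{L}} x_u y_v = -\sum_{(u,v)\in L} x_u y_v$. Hence it suffices to prove the bound for the sum restricted to the large pairs $L$.

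For the sum over $L$, the key observation is that the very definition of $L$ provides a useful lower bound on $|x_u y_v|$: since $(u,v)\in L$ means $|x_u y_v| \geq \sqrt{t}/n$, we may write
\[
|x_u y_v| \;=\; \frac{|x_u y_v|^2}{|x_u y_v|} \;\leq\; \frac{n}{\sqrt{t}}\, |x_u y_v|^2.
\]
Summing this pointwise bound over $(u,v)\in L$ and extending to all pairs in $[n]\times[n]$ (which only increases the sum of nonnegative terms) yields
\[
\sum_{(u,v)\in L} |x_u y_v| \;\leq\; \frac{n}{\sqrt{t}} \sum_{(u,v)\in [n]\times[n]} x_u^2 y_v^2 \;=\; \frac{n}{\sqrt{t}}\,\|x\|^2 \|y\|^2 \;=\; \frac{n}{\sqrt{t}},
\]
where the final equality uses $\|x\|=\|y\|=1$.

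Combining these two observations via the triangle inequality,
\[
\Big| \sum_{(u,v)\in \overline{L}} x_u y_v \Big| \;=\; \Big| \sum_{(u,v)\in L} x_u y_v \Big| \;\leq\; \sum_{(u,v)\in L} |x_u y_v| \;\leq\; \frac{n}{\sqrt{t}},
\]
as desired. There is no real obstacle in this claim: the entire argument is deterministic (the randomness of $M$ plays no role here, since the small-entry contribution will be handled separately via a martingale concentration on the actual entries $M_{u,v}$), and both ingredients---the cancellation from $x\perp\overline{\mathbf{1}}$ and the Cauchy--Schwarz-type trick using the threshold $\sqrt{t}/n$---are elementary. The purpose of the claim is to ensure that once we replace $M_{u,v}$ by its mean $t/n$ over the small entries, the main term $(t/n)\sum_{(u,v)\in\overline{L}} x_u y_v$ is $O(\sqrt{t})$, matching the target bound and leaving only the centered fluctuation to be controlled by a concentration inequality.
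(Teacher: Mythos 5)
Your proof is correct and matches the paper's argument essentially verbatim: both use $\sum_u x_u = 0$ to reduce to the sum over $L$, then use the threshold $|x_u y_v| \ge \sqrt{t}/n$ to bound $|x_u y_v|$ by $\frac{n}{\sqrt{t}}\,x_u^2 y_v^2$ and sum against $\|x\|^2\|y\|^2 = 1$. No difference in approach worth noting.
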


\begin{proof}
Since $\sum x_i = 0$, we have 

\[
\left( \sum x_i\right) \left(\sum y_i \right) = \sum_{(u,v) \in L} x_uy_v + \sum_{(u,v) \in \overline{L}}x_uy_v = 0
\] 

or 

\[
\left| \sum_{u,v \in \overline{L}} x_u y_v\right|  = \left| \sum_{u,v \in L} x_u y_v\right| 
\]

To bound this, we note that 
\begin{align*}
1 & = \left(\sum x_u^2\right) \left( \sum y_u^2 \right) \\
& \geq \sum_{(u,v) \in L} x_u^2y_v^2 \\
& \geq \frac{\sqrt{t}}{n}\left| \sum_{(u,v) \in L}x_uy_v \right|
\end{align*}

which gives us what we want.
\end{proof}

Given the above lemma, we can easily compute the expectation:

And so 

\[
\E\left[ \sum_{(u,v) \in L}x_uM_{u,v}y_v \right] = \frac{t}{n} \sum_{(u,v) \in \overline{L}}x_uy_v \in [- \sqrt{t}, \sqrt{t}]
\]

\begin{claim}
We have that with high probability,
\[
\sum_{(u,v) \in \overline{L}}x_uM_{u,v}y_v = O(\sqrt{t})
\]
\end{claim}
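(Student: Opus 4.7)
I plan to fix $x \in T$, $y \in T'$, show that
\[
S \defeq \sum_{(u,v) \in \overline{L}} x_u M_{u,v} y_v
\]
concentrates on scale $O(\sqrt{t})$ with probability at least $1 - e^{-Kn}$ for a constant $K > 2\log(C_v/\epsilon) + 1$, and then union bound over the $(C_v/\epsilon)^{2n}$ pairs in $T \times T'$. The preceding claim already shows $|\E[S]| = (t/n)\bigl|\sum_{(u,v)\in\overline{L}} x_u y_v\bigr| \leq \sqrt{t}$, so the entire burden is on the concentration step.

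The approach is to apply the bounded-variance inequality (\ref{ineq:BdVar}) to an \emph{edge-by-edge} exposure martingale. Order the $nt$ random choices lexicographically: at step $j$, vertex $u$ reveals its $k$-th distinct edge $e_k$, drawn uniformly from the set $R_k$ of $n - k + 1$ edges it has not yet picked. Let $X_j$ denote the expectation of $S$ conditioned on the first $j$ choices. A direct computation (using that the remaining $t - k$ choices form a uniform $(t-k)$-subset of $R_k$ before exposure and of $R_{k+1} = R_k \setminus \{e_k\}$ afterwards) yields the closed form
\[
X_j - X_{j-1} = \frac{n-t}{n-k}\,x_u\!\left(y_{e_k}\mathbbm{1}[(u,e_k) \in \overline{L}] \;-\; \frac{1}{n-k+1}\sum_{e \in R_k} y_e\,\mathbbm{1}[(u,e) \in \overline{L}]\right).
\]

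The key fact is that $(u,v) \in \overline{L}$ means $|x_u y_v| \leq \sqrt{t}/n$ by definition, so both the increment term and the correction term above are bounded by $\sqrt{t}/n$ in absolute value; this gives the bounded-difference parameter $C = O(\sqrt{t}/n)$. For the variance, the only randomness in step $j$ is the uniform choice of $e_k \in R_k$, and one bounds
\[
\Var(X_j - X_{j-1}\mid\text{past}) \;\leq\; \frac{x_u^2}{n-k+1}\sum_{e \in R_k} y_e^2\,\mathbbm{1}[(u,e)\in\overline{L}] \;\leq\; \frac{x_u^2}{n-k+1}.
\]
Summing over $k \in [t]$ and $u \in [n]$ (using $\|x\|^2 = 1$ and $t \ll n$) gives $\sigma^2 = O(t/n)$. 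Plugging $C = O(\sqrt{t}/n)$, $\sigma^2 = O(t/n)$, and $\lambda = c\sqrt{t}$ into (\ref{ineq:BdVar}) yields a failure probability of $\exp\bigl(-\Omega(c^2 n/(1 + c))\bigr)$, and choosing $c$ a sufficiently large constant defeats the union-bound cost.

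The main obstacle is obtaining the clean expression for $X_j - X_{j-1}$: revealing $e_k$ simultaneously alters the realized contribution of vertex $u$ and the conditional distribution of its remaining $t - k$ choices, and the partial cancellation between these effects is what produces the prefactor $(n-t)/(n-k)$. More importantly, the $\overline{L}$-restriction must be used in the refined form $|y_e| \leq \sqrt{t}/(n|x_u|)$ (for $e$ with $(u,e)\in\overline{L}$) in order to bound the correction term by $\sqrt{t}/n$; the naive Cauchy--Schwarz estimate $\bigl|\sum_{e \in R_k} y_e \mathbbm{1}[(u,e)\in\overline{L}]\bigr| \leq \sqrt{n - k + 1}$ would give a bound of order $1/\sqrt{n}$, which is far too weak. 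A cleaner alternative, avoiding the reconditioning bookkeeping entirely, is to invoke Hoeffding's MGF comparison between sampling-without- and sampling-with-replacement: this reduces $S$ to a sum of $nt$ i.i.d.\ bounded summands of magnitude $\leq \sqrt{t}/n$ and total variance $\leq t/n$, to which Bernstein applies directly and gives the same bound.
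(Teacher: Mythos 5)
Your proof is correct and follows essentially the same route as the paper: both expose the $nt$ random edge choices one at a time (column by column, $t$ per column), form the Doob martingale $X_{i,j} = \E[S \mid e_{1,1},\ldots,e_{i,j}]$, bound the increments by $O(\sqrt{t}/n)$ using the $\overline{L}$ restriction, bound the cumulative conditional variance, and apply the bounded-variance martingale inequality~(\ref{ineq:BdVar}) before union-bounding over the $\left(C_v/\epsilon\right)^{O(n)}$ pairs in $T\times T'$. Your bookkeeping is somewhat tighter than the paper's in two places: you derive the exact closed form for the increment with the prefactor $(n-t)/(n-k)$ rather than just an upper bound, and your cumulative variance $\sigma^2 = O(t/n)$ is the correct order --- the paper's displayed bound $\sigma^2 \le 3/n$ appears to drop a factor of $t$ from the sum over $j \in [t]$, which is immaterial here only because the term $C\lambda/3 = O(t/n)$ already dominates the denominator of the exponent. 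Your remark that the $\overline{L}$ restriction must be exploited in the pointwise form $|y_e| \le \sqrt{t}/(n|x_u|)$ (rather than via Cauchy--Schwarz) to control the correction term is exactly the right point, and the concluding alternative via Hoeffding's with/without-replacement MGF comparison plus Bernstein is a valid and somewhat cleaner route that the paper does not take.
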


\begin{proof}
We set up a martingale and use the method of bounded variances. Let us write the quantity that we wish to estimate as 

\[
X \defeq \sum_{(u,v) \in B}x_uM_{u,v}y_v
\]

We imagine $M$ being sampled one column at a time, and in each column, $t$ entries are sampled. For column $i$, let us denote these by $e_{i,1}, \ldots, e_{i,t}$, and let us abbreviate $E_i \defeq (e_{i,1},\ldots, e_{i,t})$, and $E_{\overline{i}} \defeq (E_1,\ldots, E_{i-1}, E_{i+1}, \ldots E_n)$. Clearly, $X = X(e_{1,1},\ldots, e_{n,t})$. Denote $X_{i,j} \defeq \E[X | e_{1,1} ,\ldots, e_{i,j}]$. 

For distinct $k,k' \in [n]$, it is easy to see that we have the `Lipshitz' property:

\[
\E[X | e_{1,1},\ldots, e_{i,j-1}, e_{i,j} = k] - \E[X | e_{1,1},\ldots, e_{i,j-1}, e_{i,j} = k'] \leq |x_iy_k| + |x_iy_{k'}|
\]

Therefore, we have a bounded difference property on $|X_{i,j} - X_{i,j-1}|$ as follows:

\begin{align*}
|X_{i,j} - X_{i,j-1}| & = \Bigg|\E[X | e_{1,1},\ldots, e_{i,j-1}, e_{i,j}]  \\
&~~~~ -  \frac{1}{n - j - 1}\sum_{k' \in [n] \setminus \{e_{i,1},\ldots,e_{i,j-1}\}}\E[X | e_{1,1},\ldots, e_{i,j-1}, e_{i,j} = k'] \Bigg| \\
& \leq |x_i||y_{e_j}| + \frac{1}{n - j - 1}\sum_{k' \in [n] \setminus \{e_{i,1},\ldots,e_{i,j-1}\}}\mathbb{1}[(i,k) \in \overline{B}]|x_iy_{k'}|\\
& \leq |x_iy_{e_j}| + \frac{1}{\sqrt{n}}\cdot \frac{\sqrt{t}}{n} \\
\end{align*}

We will use that the above quantity is bounded by $\frac{2\sqrt{t}}{n}$ since we only consider $|x_iy_{e_j}|$ where $(i,e_j) \in \overline{B}$.

Now, we would like to compute the variance of the martingale

\begin{align*}
\Var(X_{i,j} - X_{i,j-1} | e_{1,1},\ldots, e_{i,j-1}) & \leq \frac{1}{n - j - 1} \sum_{k \in [n]} \left(|x_iy_{k}| + \frac{\sqrt{t}}{n\sqrt{n}}\right)^2 \\
& \leq \frac{2}{n - j - 1} \sum_{k \in [n]}\left(|x_iy_k|^2 + \frac{t}{n^3} \right) \\
& \leq \frac{2x_i^2}{n - j - 1} + O\left( \frac{t}{n^4}\right)
\end{align*}

Where the last inequality uses that $\sum_k y_k^2 \leq 1$. Therefore, the variance of the martingale is at most

\[
\frac{2}{n - t}\sum_i x_i^2 + O\left( \frac{t}{n^3}\right) \leq \frac{3}{n} =: \sigma^2
\]

since $\sum_i x_i^2 \leq 1$. Therefore, by the bounded variance martingale inequality~(\ref{ineq:BdVar}), using $|X_i - X_{i-1}|\leq \frac{2\sqrt{t}}{n} =: D$:

\begin{align*}
\Pr(X \geq (C+1)\sqrt{t}) & \leq \exp \left\{ - \frac{C^2t}{2\sigma^2 + tD/3} \right\} \\
& \leq \exp \left\{ -\frac{C^2t}{\frac{3}{n} + \frac{2t}{3n}}\right\} \\
& \leq \exp\left\{ -C^2n \right\}
\end{align*}

This lets us union bound over all $x,y \in T$, whose number can be bounded by $\left( \frac{C_v}{\epsilon}\right)^n$.

\end{proof}

\section{Conclusion and future directions}
One thing to note on the algorithmic side, it is worth noting that the second result does not give an easy efficient algorithm, and it would be interesting to find one. This was asked as an open problem in~\cite{HR18} too, in the setting of Theorem~\ref{thm:H2}, when $n = \Omega(m^2 \log m)$. 

As for better bounds in the random model, one natural question is whether or not one can easily verify the Beck-Fiala conjecture for all values of $t$. At first, it is tempting to claim that the second moment method of Theorem~\ref{thm:H2} might  straightaway yield better bounds in $\mathcal{H}_1$, i.e., to either get a bound of $O(\sqrt{t})$ for the range of parameters not handled, or to get a bound of $o(\sqrt{t})$ when $n \gg m$. However, as of now, we have not attempted this.

\section{Acknowledgements}

I am extremely thankful to Jeff Kahn for suggesting the problem, the helpful discussions, and the numerous references. I would also like to thank Huseyin Acan and Cole Franks for the very helpful discussions. I would also like to thank Cole for sharing an unfinished draft of~\cite{FS18} with me, and Amey Bhangale for helpful feedback on the writeup. I am also grateful to the anonymous SODA referees whose suggestions, among other things, helped make Theorem~\ref{thm:H1} constructive.

\bibliographystyle{alpha}
\bibliography{references.bib}

\begin{thebibliography}{BFSU98}

\bibitem[AS00]{AS00}
Noga Alon and Joel~H. Spencer.
\newblock The probabilistic method, 2000.

\bibitem[Ban10]{BAN10}
Nikhil Bansal.
\newblock Constructive algorithms for discrepancy minimization.
\newblock In {\em 51th Annual {IEEE} Symposium on Foundations of Computer
  Science, {FOCS} 2010, October 23-26, 2010, Las Vegas, Nevada, {USA}}, pages
  3--10, 2010.

\bibitem[Bec81]{BEC81}
J{\'{o}}zsef Beck.
\newblock Roth's estimate of the discrepancy of integer sequences is nearly
  sharp.
\newblock {\em Combinatorica}, 1(4):319--325, 1981.

\bibitem[BF81]{BF81}
J{\'{o}}zsef Beck and Tibor Fiala.
\newblock "integer-making" theorems.
\newblock {\em Discrete Applied Mathematics}, 3(1):1--8, 1981.

\bibitem[BFSU98]{BFSU98}
Andrei~Z. Broder, Alan~M. Frieze, Stephen Suen, and Eli Upfal.
\newblock Optimal construction of edge-disjoint paths in random graphs.
\newblock {\em {SIAM} J. Comput.}, 28(2):541--573, 1998.

\bibitem[Buk16]{BUK16}
Boris Bukh.
\newblock An improvement of the beck-fiala theorem.
\newblock {\em Combinatorics, Probability and Computing}, 25(3):380?398, 2016.

\bibitem[CL06]{CL06}
Fan Chung and Linyuan Lu.
\newblock Concentration inequalities and martingale inequalities: a survey.
\newblock {\em Internet Math.}, 3(1):79--127, 2006.

\bibitem[EL15]{EL15}
Esther Ezra and Shachar Lovett.
\newblock On the beck-fiala conjecture for random set systems.
\newblock In {\em APPROX-RANDOM}, 2015.

\bibitem[FKS89]{FKS}
J.~Friedman, J.~Kahn, and E.~Szemer{\'e}di.
\newblock On the second eigenvalue of random regular graphs.
\newblock In {\em Proceedings of the Twenty-first Annual ACM Symposium on
  Theory of Computing}, STOC '89, pages 587--598, New York, NY, USA, 1989. ACM.

\bibitem[FS18]{FS18}
C.~{Franks} and M.~{Saks}.
\newblock {On the Discrepancy of Random Matrices with Many Columns}.
\newblock {\em ArXiv e-prints}, July 2018.

\bibitem[HR18]{HR18}
Rebecca Hoberg and Thomas Rothvoss.
\newblock A fourier-analytic approach for the discrepancy of random set
  systems.
\newblock {\em CoRR}, abs/1806.04484, 2018.

\bibitem[KLP12]{KLP12}
Greg Kuperberg, Shachar Lovett, and Ron Peled.
\newblock Probabilistic existence of rigid combinatorial structures.
\newblock In {\em Proceedings of the 44th Symposium on Theory of Computing
  Conference, {STOC} 2012, New York, NY, USA, May 19 - 22, 2012}, pages
  1091--1106, 2012.

\bibitem[LM15]{LM15}
Shachar Lovett and Raghu Meka.
\newblock Constructive discrepancy minimization by walking on the edges.
\newblock {\em {SIAM} J. Comput.}, 44(5):1573--1582, 2015.

\bibitem[Mat95]{MAT95}
Ji{\v{r}}{\'{\i}} Matou{\v{s}}ek.
\newblock Tight upper bounds for the discrepancy of half-spaces.
\newblock {\em Discrete {\&} Computational Geometry}, 13:593--601, 1995.

\bibitem[Rot17]{ROT17}
Thomas Rothvoss.
\newblock Constructive discrepancy minimization for convex sets.
\newblock {\em {SIAM} J. Comput.}, 46(1):224--234, 2017.

\bibitem[Spe85]{SPE85}
Joel Spencer.
\newblock Six standard deviations suffice.
\newblock {\em Transactions of the American Mathematical Society},
  289(2):679--706, 1985.

\bibitem[Spe88]{SPE88}
Joel Spencer.
\newblock Coloring the projective plane.
\newblock {\em Discrete Mathematics}, 73(1):213 -- 220, 1988.

\end{thebibliography}

\end{document}